\newtheorem{lemma}{Lemma}[section]
\newtheorem{Defi}{Definition}[section]
\newtheorem{them}[lemma]{Theorem}
\newtheorem{lm}[lemma]{Lemma}
\newtheorem{rem}{Remark}
\newtheorem{Cor}{Corollary}
\DeclareMathOperator{\rank}{rank}
\DeclareMathOperator{\T}{\textup{T}}
\DeclareMathOperator{\diag}{\textup{diag}}
\begin{document}

\title{\textbf{Smith Normal Form and the Generalized Spectral Characterization of Graphs}\footnote{This work is supported by National Natural Science Foundation of China (Grant Nos.~11971376 and 12001006) and Fundamental Research Funds for the Central Universities (Grant No. 531118010622)}}
\author{
\small Lihong Qiu $^{{\rm a}}$  \quad \small Wei Wang $^{\rm b}$ \quad \small Wei Wang $^{{\rm a}}$\footnote{The corresponding author: E-mail
address: wang$\_$weiw@xjtu.edu.cn } \small \quad Hao Zhang $^{\rm c}$ \\
{\footnotesize$^{\rm a}$School of Mathematics and Statistics, Xi'an Jiaotong University, Xi'an 710049, P. R. China}\\
{\footnotesize$^{\rm b}$School of Mathematics, Physics and Finance, Anhui Polytechnic University, Wuhu 241000, P. R. China}\\
{\footnotesize$^{\rm c}$School of Mathematics, Hunan University, Changsha 410082, P. R. China}}
\date{}
\maketitle
\abstract \vspace{0.5cm} Spectral characterization of graphs is an important topic in spectral graph theory, which has received a lot of attention from researchers in recent years. It is generally very hard to show a given graph to be determined by its spectrum. Recently, Wang~\cite{wang2017JCTB} gave a simple arithmetic condition for graphs being determined by their generalized spectra. Let $G$ be a graph with adjacency matrix $A$ on $n$ vertices, and $W=[e,Ae,\ldots,A^{n-1}e]$ ($e$ is the all-one vector) be the walk-matrix of $G$. A theorem of Wang~\cite{wang2017JCTB} states that if $2^{-\lfloor n/2\rfloor}\det W$ (which is always an integer) is odd and square-free, then $G$ is determined by the generalized spectrum. In this paper, we find a new and short route which leads to a stronger version of the above theorem. The result is achieved by using the Smith Normal Form of the walk-matrix of $G$. The proposed method gives a new insight in dealing with the problem of generalized spectral characterization of graphs.

\noindent
\textbf{Key Words:} Graph spectra; Cospectral graphs; Determined by generalized spectrum; Smith Normal Form.\\
\noindent
\textbf{AMS Classification}: 05C50

\section{Introduction}

The spectrum of a graph encodes a lot of combinatorial information about the given graph and thus has long been a powerful tool in dealing with various problems in graph theory.

A long-standing unsolved question in spectral graph theory is ``Which graphs are determined by their spectra (DS for short)?". The problem originates from chemistry and is closely related to many other problems of central interest such as the graph isomorphism problem and a famous problem of Kac ``Can one hear the shape of a drum?".

We say two graphs are \emph{cospectral}, if they share the same spectrum. A graph $G$ is said to be \emph{determined by its spectrum} (DS for short)
if any graph having the same spectrum as $G$ is isomorphic to $G$. It is generally very hard and challenging to show a given graph to be DS. Despite many efforts, up to now, only very few families of graphs with special structural properties are known to be DS, and the techniques involved in proving them to be DS highly depend on the special properties of the spectra of these graphs and cannot be applied to general graphs. For the background and more known results about this problem, we refer the readers to \cite{DH,DH1}.

 In recent years, Wang and Xu~\cite{W1}, Wang~\cite{wang2017JCTB} considered the above problem from the perspective of the generalized spectrum. Two graphs are \emph{generalized cospectral} if they are cospectral with cospectral complements. A graph $G$ is said to be \emph{determined by its generalized spectrum} (DGS for short),
if any graph generalized cospectral with $G$ is isomorphic to $G$.

 For a given graph $G$ with adjacency matrix $A=A(G)$ on $n$ vertices, let $W=W(G)=:[e,Ae,\ldots,A^{n-1}e]$ ($e$ is the all-one vector) be the walk-matrix of $G$. In Wang~\cite{wang2017JCTB}, the author proved the following theorem.
 \begin{them}[Wang~\cite{wang2013EJC,wang2017JCTB}]\label{thm1} If $\frac{\det W(G)}{2^{\lfloor n/2\rfloor}}$ (which is always an integer) is odd and square-free, then $G$ is DGS.
\end{them}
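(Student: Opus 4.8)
The plan is to show that any graph $H$ generalized cospectral with $G$ is isomorphic to $G$. By the criterion of Wang and Xu~\cite{W1}, there is a rational orthogonal matrix $Q$ with $Qe=e$ and $Q^{\T}A(G)Q=A(H)$, and it suffices to prove that $Q$ is a permutation matrix: an integral orthogonal matrix whose row sums are all $1$ is a permutation matrix, and then $A(H)=Q^{\T}A(G)Q$ exhibits an isomorphism $H\cong G$. Since the hypothesis makes $\det W(G)$ a nonzero multiple of $2^{\lfloor n/2\rfloor}$, the matrix $W:=W(G)$ is nonsingular, and comparing the columns $A(H)^{k}e=Q^{\T}A(G)^{k}e$ gives $W(H)=Q^{\T}W$, so $Q=W\,W(H)^{-1}$ is determined by $H$. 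Let $\ell$ be the \emph{level} of $Q$, the least positive integer with $\ell Q\in M_{n}(\mathbb Z)$. The theorem reduces to proving $\ell=1$.

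The first step is to read off the Smith Normal Form (SNF) of $W$ from the hypothesis. Since $A$ is symmetric with zero diagonal, $x^{\T}Ax\equiv 0\pmod 2$ and $e^{\T}A^{2k}e=\sum_{j}\bigl((A^{k}e)_{j}\bigr)^{2}\equiv\sum_{j}(A^{k}e)_{j}=e^{\T}A^{k}e\pmod 2$; hence $e^{\T}A^{m}e\equiv 0\pmod 2$ for all $m\ge 1$, so over $\mathbb F_{2}$ the Gram matrix $W^{\T}W$ has rank at most $1$, and $2\,\rank_{\mathbb F_{2}}W-n\le\rank_{\mathbb F_{2}}(W^{\T}W)\le 1$ yields $\rank_{\mathbb F_{2}}W\le\lceil n/2\rceil$. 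Thus the SNF $\diag(d_{1},\dots,d_{n})$ of $W$ (with $d_{1}\mid\cdots\mid d_{n}$) has at least $\lfloor n/2\rfloor$ even invariant factors; as $v_{2}(\det W)=\lfloor n/2\rfloor$, there are exactly $\lfloor n/2\rfloor$ of them, each exactly divisible by $2$. Combining this with the square-freeness of $b:=2^{-\lfloor n/2\rfloor}\det W$ and the divisibility chain forces $d_{1}=\cdots=d_{\lceil n/2\rceil}=1$, $d_{\lceil n/2\rceil+1}=\cdots=d_{n-1}=2$, $d_{n}=2b$. Fixing $U,V\in GL_{n}(\mathbb Z)$ with $UWV=\diag(d_{1},\dots,d_{n})$, the identity $(Q^{\T}U^{-1})\diag(d_{1},\dots,d_{n})=W(H)V\in M_{n}(\mathbb Z)$ together with $d_{j}\mid d_{n}=2b$ gives $2b\,Q^{\T}\in M_{n}(\mathbb Z)$, i.e.\ $\ell\mid 2b$. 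In particular $\ell$ is square-free and every prime divisor of $\ell$ divides $\det W$.

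Now fix an odd prime $p\mid\ell$; then $p\,\|\,\det W$, exactly one invariant factor of $W$ is divisible by $p$, and $\rank_{\mathbb F_{p}}W=n-1$; let $\langle v\rangle$ be the one‑dimensional left null space of $W$ over $\mathbb F_{p}$. Put $B:=\ell Q^{\T}\bmod p$, nonzero by minimality of $\ell$. Reducing $\ell W(H)=\ell Q^{\T}W$ modulo $p$ gives $BW\equiv 0$, so every row of $B$ is a multiple of $v^{\T}$, i.e.\ $B=uv^{\T}$ with $u,v\ne 0$; and since $p\mid\ell$, the relations $B^{\T}B\equiv\ell^{2}QQ^{\T}\equiv 0$ and $BB^{\T}\equiv\ell^{2}Q^{\T}Q\equiv 0\pmod p$ force first $u^{\T}u\equiv 0$ and then $v^{\T}v\equiv 0\pmod p$, so $v$ would be isotropic. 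The crux — which I expect to be the main obstacle — is to rule this out, i.e.\ to prove that when $p\,\|\,\det W$ the null vector $v$ is anisotropic, $v^{\T}v\ne 0$ in $\mathbb F_{p}$. My plan is: the hyperplane $\mathcal W_{p}:=\operatorname{col}(W\bmod p)=\operatorname{span}\{A^{k}e\}$ is $A$-invariant, so $v$, spanning its annihilator, is an eigenvector of the symmetric $\bar A:=A\bmod p$, say $\bar Av=\bar\lambda v$; for a symmetric matrix over $\overline{\mathbb F_{p}}$ the generalized eigenspaces are mutually orthogonal and the standard form restricts non-degenerately to each, so it would suffice to show that $\bar\lambda$ is a \emph{simple} root of $\chi(x)=\det(xI-A)\bmod p$ (then its generalized eigenspace is exactly $\langle v\rangle$ and non-degeneracy there gives $v^{\T}v\ne 0$). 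That $\bar\lambda$ is simple is where $p\,\|\,\det W$ (rather than merely $p\mid\det W$) and the walk-matrix structure must be used: writing $W=S\,\diag(S^{-1}e)\,V$ over the splitting field of $\chi$, with $V$ the Vandermonde matrix of the eigenvalues and $S^{\T}S=\diag(s_{i}^{\T}s_{i})$ (the eigenvectors of the symmetric $A$ being orthogonal, so $\prod_{i}s_{i}^{\T}s_{i}=(\det S)^{2}$), one expresses $v_{p}(\det W)$ through $v_{p}(\det S)$, the valuations $v_{p}\bigl((S^{-1}e)_{i}\bigr)$ and $\tfrac12 v_{p}(\operatorname{disc}\chi)$, and then checks that $v_{p}(\det W)=1$, together with non-negativity of these quantities, leaves no room for a repeated root at $\bar\lambda$. (I regard controlling this local eigendata, or finding a workaround if $\bar\lambda$ can fail to be simple, as the genuinely delicate point.) Granting it, the isotropy of $v$ is a contradiction, so no odd prime divides $\ell$.

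It remains to rule out $p=2$, i.e.\ to improve $\ell\mid 2b$ (with $b$ odd, square-free) to $\ell\mid b$, hence $\ell=1$. Here the relevant local data is that $\rank_{\mathbb F_{2}}W=\lceil n/2\rceil$ and $v_{2}(\det W)=\lfloor n/2\rfloor$ exactly, so $W\otimes\mathbb Z_{(2)}$ has SNF $\diag(1,\dots,1,2,\dots,2)$; the plan is to show $\ell$ cannot be even by a refined $2$-adic analysis of $B=\ell Q^{\T}\bmod 2$ together with $\ell Q^{\T}\bmod 4$, using that $\operatorname{col}(W\bmod 2)$ is totally isotropic both for the standard form and for the alternating form $x\mapsto x^{\T}Ax$ (the latter from $W^{\T}AW\equiv 0\pmod 2$), and the constraints $Q^{\T}e=e$ and $q_{i}^{\T}Aq_{i}=0$ for the columns $q_{i}$ of $Q$ (the zero-diagonal condition on $A(H)$). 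This $2$-adic step, like the anisotropy claim above, is where I expect the real work to be concentrated; once $\ell=1$ is established, $Q$ is an integral orthogonal matrix with unit row sums, hence a permutation matrix, and $H\cong G$.
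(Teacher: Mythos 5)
Your framework is sound and, up to the point where the real difficulties begin, matches the known arguments: the reduction to showing $\ell(Q)=1$, the determination of the SNF of $W$ from the hypothesis (the rank bound $\rank_2 W\le\lceil n/2\rceil$ and the resulting shape $\diag(1,\ldots,1,2,\ldots,2,2b)$), the divisibility $\ell\mid d_n=2b$ (this is exactly Lemma~\ref{rem1} of the paper), and the observation that an odd prime $p\mid\ell$ forces $\ell Q^{\T}\equiv uv^{\T}\pmod p$ with $v$ spanning the left null space of $W$ and $v^{\T}v\equiv0$. But both halves where the theorem actually lives are left as announced plans. For the odd case, everything hinges on your claim that the eigenvalue $\bar\lambda$ of $v$ is a \emph{simple} root of $\chi\bmod p$, and this does not follow from the data your sketch uses (symmetric integral matrix, $p$ exactly dividing $\det W$, $\rank_pW=n-1$): for $p=3$ and $A=\left(\begin{smallmatrix}0&3\\3&3\end{smallmatrix}\right)$ one has $\det W=3$, $\rank_3W=1=n-1$, $v=(1,-1)^{\T}$, $Av\equiv0$, yet $\chi\equiv x^2\pmod 3$, so $\bar\lambda=0$ is a double root; the valuation bookkeeping you propose closes no contradiction here, since $\prod_{i<j}(\lambda_i-\lambda_j)=3\sqrt5$ already accounts for the single factor of $p$. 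So either genuinely graph-specific input must be found, or a different mechanism is needed; neither the paper nor Wang's original proof establishes simplicity (or even anisotropy of $v$). The paper instead perturbs to $A+tJ$: Lemmas~\ref{lemodd} and \ref{lemodd1} produce a $t_0$ for which $\phi(x,t_0)\bmod p$ has no two distinct roots, Lemma~\ref{oddroot} then shows $W_{t_0}(G)x\equiv0$ and $W_{t_0}(H)x\equiv0\pmod p$ have the same solutions, and the common relation is used to divide a column by $p$ and invoke Lemma~\ref{lSNF} together with Lemma~\ref{rem1} to get $\ell\mid d_n/p$.

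The $p=2$ step — historically the hard half of this theorem — is in your write-up only a declared intention (``refined $2$-adic analysis'' of $\ell Q^{\T}\bmod 2$ and $\bmod 4$), with no argument that could be checked or completed. The paper's route is concrete: it builds $M=A^{\lceil n/2\rceil}+c_2A^{\lceil (n-2)/2\rceil}+\cdots$ from the characteristic polynomial, uses that $c_i$ is even for odd $i$ (Lemma~\ref{sachs}) and that a symmetric integral $S$ with $S^2\equiv0\pmod2$ satisfies $Se\equiv0\pmod2$ (Lemma~\ref{leM}) to get $M(G)e\equiv0\pmod 2$ (Lemma~\ref{evenM}); the decisive point is that the \emph{same} coefficients work for the cospectral mate $H$, so $Q^{\T}M(G)e=M(H)e$, the last $\lfloor n/2\rfloor$ columns can be halved simultaneously, and Lemmas~\ref{lSNF} and \ref{rem1} give $\ell\mid d_n/2$, i.e.\ $\ell$ odd. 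Your listed ingredients ($Q^{\T}e=e$, $q_i^{\T}Aq_i=0$, total isotropy of the column space mod $2$) are far from forcing this conclusion by themselves. In sum, the proposal correctly handles the easy reductions but omits the two essential steps, and the key lemma it proposes for the odd case is unproven and, as stated, not a consequence of the hypotheses it invokes.
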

It is not difficult to show (see Lemma 3.5 in~\cite{wang2017JCTB}) that $\frac{\det W(G)}{2^{\lfloor n/2\rfloor}}$ is odd and square-free if and only if the Smith Normal Form of $W(G)$ is as follows:
$${\rm diag}(\underbrace{1,1,\cdots,1}_{\lceil\frac{n}{2}\rceil},\underbrace{2,2,\cdots,2,2m}_{\lfloor\frac{n}{2}\rfloor}),$$
where $m$ is an odd and square-free integer.

Motivated by the above observation, in this paper, we shall give a stronger version of the above theorem by using Smith Normal Form of the walk-matrix of $G$, which significantly improves upon Theorem~\ref{thm1}; see Theorem~\ref{Main} in Section 2.2. The proof of our result is, however, new and much shorter than the original one, and hence gives a new insight in dealing with the problem of generalized spectral characterizations of graphs. The key new ingredient is an observation that whenever $G$ and $H$ are generalized cospectral, then under certain mild conditions, the null spaces of $W(G)$ and $W(H)$ coincide, over the finite field $\mathbb{F}_p$, where $p$ is a prime.

The rest of the paper is organized as follows. In Section 2, we shall give some preliminary results that will be needed later in the paper and then present the main result. In Section 3, we present the proof of Theorem~\ref{Main} in two cases: $p$ is odd and $p=2$.  Some examples and conclusions are given in Section 4 and 5, respectively.

\section{Preliminaries and main results}
\subsection{Preliminaries}
In this section, we shall give some preliminaries that will be used later in the paper.

Throughout this paper, let $G$ be a simple graph with adjacency matrix $A=A(G)$ on $n$ vertices. The \emph{spectrum} of $G$, denoted by ${\rm Spec}(G)$, is the multiset of the eigenvalues of $A(G)$. The \emph{generalized spectrum} of $G$ is the ordered pair $({\rm Spec}(G),{\rm Spec}(\bar{G}))$, where $\bar{G}$ is the complement of $G$. We say that $G$ and $H$ are \emph{generalized cospectral}, if they have the same generalized spectrum. A graph $G$ is \emph{determined by the generalized spectrum} (DGS for short) if any graph having the same generalized spectrum as $G$ is isomorphic to $G$.

The \emph{walk-matrix} of $G$ is defined as $$W=W(G):=[e,Ae,\ldots,A^{n-1}e],$$ where $e$ is the all-one vector. Note that the $(i,j)$-th entry of $W$ counts the number of walks starting from vertex $i$ with length $j-1$.


%
An integral matrix $U$ is \emph{unimodular}, if $\det(U) =\pm 1$. It is well-known that, for every integral matrix $M$ with full rank, there exist unimodular matrices $U$ and $V$ such that $M=USV$, where $S=\diag(d_{1},d_{2},\ldots,d_{n-1},d_{n})$ is a diagonal matrix with $d_{i}\mid d_{i+1}$ for $i=1,2,\ldots,n-1$, which is known as Smith Normal Form (abbreviated SNF) of the matrix $M$, and $d_{i}=d_i(M)$ is the $i$-th \emph{invariant factor} of $M$. The SNF of an integral matrix can be computed efficiently; see e.g.~\cite{ASchrijver}.

Recall that a \emph{rational orthogonal matrix} $Q$ is an orthogonal matrix with rational
entries; it is called \emph{regular} if the sum of every row (column) of $Q$ is one, i.e., $Qe=e$.

Wang and Xu~\cite{W1} initiated the study of the generalized spectral characterizations of graphs. The starting point of their study is the following lemma, which gives a simple characterization for a pair of graphs to be generalized cospectral.

\begin{lemma}[Wang and Xu~\cite{W1}; Johnson and Newman~\cite{ JohnsonNew}]\label{lortho}
Let $G$ and $H$ be two graphs with $\det W(G)\neq0$. Then $H$ is generalized cospectral with $G$ if and only if there exists a unique regular rational orthogonal matrix $Q$ such that $Q^{\T}A(G)Q=A(H)$.
\end{lemma}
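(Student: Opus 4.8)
\medskip
\noindent\emph{Proof strategy.} I would prove the two implications separately. The ``if'' direction is a short computation: given a regular rational orthogonal $Q$ with $Q^{\T}A(G)Q=A(H)$, the matrices $A(G)$ and $A(H)$ are orthogonally similar, hence cospectral; moreover $Qe=e$ together with $Q^{\T}Q=I$ gives $Q^{\T}e=e$, so $Q^{\T}JQ=(Q^{\T}e)(Q^{\T}e)^{\T}=J$, and therefore
$$Q^{\T}A(\bar G)Q=Q^{\T}(J-I-A(G))Q=J-I-A(H)=A(\bar H),$$
so that $\bar G$ and $\bar H$ are cospectral as well; thus $G$ and $H$ are generalized cospectral.

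For the ``only if'' direction I would first extract two consequences of generalized cospectrality. First, $G$ and $H$ have the same characteristic polynomial $\phi(x)=x^{n}+c_{n-1}x^{n-1}+\cdots+c_{0}$, so by the Cayley--Hamilton theorem $A(G)W(G)=W(G)C$ and $A(H)W(H)=W(H)C$, where $C$ is the companion matrix of $\phi$ (all ones on the subdiagonal, last column $-(c_{0},\dots,c_{n-1})^{\T}$). Second --- and this is the only place where the hypothesis on the complements is genuinely needed --- one has $e^{\T}A(G)^{k}e=e^{\T}A(H)^{k}e$ for every $k\ge 0$. To see this, apply the matrix determinant lemma to $xI-A(\bar G)=(x+1)I+A(G)-ee^{\T}$, obtaining the identity (valid as rational functions of $x$)
$$\phi_{\bar G}(x)=(-1)^{n}\phi(-x-1)\Bigl(1+e^{\T}\bigl((-x-1)I-A(G)\bigr)^{-1}e\Bigr),$$
and the analogue for $H$; since $\phi_{\bar G}=\phi_{\bar H}$ and $\phi_{G}=\phi_{H}$, cancelling the factor $\phi(-x-1)$ forces the walk generating functions $e^{\T}(tI-A(G))^{-1}e$ and $e^{\T}(tI-A(H))^{-1}e$ to coincide, and comparing their expansions as power series in $t^{-1}$ gives the claim. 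Equivalently, since the $(i,j)$-entry of $W(G)^{\T}W(G)$ equals $e^{\T}A(G)^{i+j-2}e$, this says $W(G)^{\T}W(G)=W(H)^{\T}W(H)$.

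With these two facts the matrix $Q$ is forced upon us. From $(\det W(H))^{2}=\det(W(H)^{\T}W(H))=\det(W(G)^{\T}W(G))=(\det W(G))^{2}\ne 0$ we see that $W(H)$ is invertible over $\mathbb{Q}$, so I would set $Q:=W(G)W(H)^{-1}$. Then $Q$ has rational entries; $Q^{\T}Q=(W(H)^{-1})^{\T}W(G)^{\T}W(G)W(H)^{-1}=I$ shows $Q$ is orthogonal; and since $e$ is the first column of both walk matrices, $W(H)^{-1}e$ is the first standard basis vector $e_{1}$, whence $Qe=W(G)e_{1}=e$ and $Q$ is regular. Finally, $C=W(G)^{-1}A(G)W(G)=W(H)^{-1}A(H)W(H)$ rearranges into $Q^{-1}A(G)Q=A(H)$, i.e.\ $Q^{\T}A(G)Q=A(H)$. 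Uniqueness is then immediate: if $Q'$ is any regular rational orthogonal matrix with $(Q')^{\T}A(G)Q'=A(H)$, then $A(G)Q'=Q'A(H)$, so $A(G)^{k}e=A(G)^{k}Q'e=Q'A(H)^{k}e$ for all $k$, whence $W(G)=Q'W(H)$ and $Q'=W(G)W(H)^{-1}=Q$.

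The step I expect to be the real obstacle is the derivation of $e^{\T}A(G)^{k}e=e^{\T}A(H)^{k}e$ from the cospectrality of $\bar G$ and $\bar H$: this is not a formal manipulation, it is precisely where the complements enter, and it relies on the determinant identity above (or, alternatively, on a careful inclusion--exclusion expansion of the closed-walk counts of $\bar G$ in terms of $\{\mathrm{tr}\,A(G)^{j}\}$ and $\{e^{\T}A(G)^{j}e\}$). Once that identity is in hand, the remainder is routine linear algebra organised around the single relation $A(G)W(G)=W(G)C$.
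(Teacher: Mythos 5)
Your proof is correct, and the paper itself offers no proof of this lemma --- it is quoted from Wang--Xu and Johnson--Newman. Your reconstruction is exactly the standard argument in those references: the determinant identity for $\phi_{\bar G}$ forces the walk generating functions to agree, hence $W(G)^{\T}W(G)=W(H)^{\T}W(H)$, and $Q=W(G)W(H)^{-1}$ is the unique regular rational orthogonal matrix conjugating $A(G)$ to $A(H)$, so there is nothing to add.
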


Define $$\mathcal{Q}(G)=\{Q\in{RO_n(\mathbb{Q})}~|~Q^{\T}A(G)Q~\mbox{is a (0,1)-matrix }\},$$
 where $RO_n(\mathbb{Q})$ denotes the set of all $n$ by $n$ regular rational orthogonal matrices.

\begin{lm}[Wang and Xu~\cite{W1}]\label{good} Let $G$ be a graph with $\det W(G)\neq 0$. Then $G$ is DGS if and only if $\mathcal{Q}(G)$ contains only permutation matrices.
\end{lm}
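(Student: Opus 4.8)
The plan is to use Lemma~\ref{lortho} to identify the elements of $\mathcal{Q}(G)$ with the graphs that are generalized cospectral with $G$, and then to recognize which of these matrices $Q$ encode the isomorphism $H\cong G$.

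First I would check that every $Q\in\mathcal{Q}(G)$ actually produces a graph. The matrix $M:=Q^{\T}A(G)Q$ is symmetric (since $A(G)$ is) and has trace $\operatorname{tr}M=\operatorname{tr}A(G)=0$; a $(0,1)$-matrix with zero trace has zero diagonal, so $M=A(H_Q)$ for some graph $H_Q$ on $n$ vertices. As $Q$ is a regular rational orthogonal matrix with $Q^{\T}A(G)Q=A(H_Q)$, the ``if'' direction of Lemma~\ref{lortho} shows that $H_Q$ is generalized cospectral with $G$. Conversely, if $H$ is generalized cospectral with $G$, Lemma~\ref{lortho} supplies a regular rational orthogonal $Q$ with $Q^{\T}A(G)Q=A(H)$, and this $Q$ lies in $\mathcal{Q}(G)$ with $H_Q=H$. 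Thus $Q\mapsto H_Q$ identifies $\mathcal{Q}(G)$ with the generalized cospectral mates of $G$.

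For the backward implication, suppose $\mathcal{Q}(G)$ contains only permutation matrices and let $H$ be generalized cospectral with $G$. Pick $Q\in\mathcal{Q}(G)$ with $A(H)=Q^{\T}A(G)Q$; by assumption $Q$ is a permutation matrix, so this equation says exactly that $H\cong G$, and hence $G$ is DGS. For the forward implication, suppose $G$ is DGS and take an arbitrary $Q\in\mathcal{Q}(G)$. The graph $H_Q$ is generalized cospectral with $G$, hence isomorphic to $G$, so there is a permutation matrix $P$ with $P^{\T}A(G)P=A(H_Q)=Q^{\T}A(G)Q$. Put $R:=QP^{\T}$; then $R\in RO_n(\mathbb{Q})$ (a product of a regular rational orthogonal matrix and a permutation matrix) and $R^{\T}A(G)R=A(G)$, i.e. $A(G)R=RA(G)$. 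Since also $Re=e$, we get $RA(G)^{j}e=A(G)^{j}e$ for every $j\ge 0$, that is, $RW(G)=W(G)$. As $\det W(G)\neq 0$, the matrix $W(G)$ is invertible, so $R=I$ and $Q=P$ is a permutation matrix.

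The only step with real content is the last commuting argument, where the non-vanishing of $\det W(G)$ is precisely what forces $R=I$; this is the same mechanism that yields the uniqueness clause in Lemma~\ref{lortho}, so I do not expect it to be a genuine obstacle. Everything else is bookkeeping: verifying that $Q^{\T}A(G)Q$ is a valid adjacency matrix and that $RO_n(\mathbb{Q})$ is closed under the products used. If one prefers not to reuse the uniqueness part of Lemma~\ref{lortho}, one can argue directly, as above, that an orthogonal matrix fixing every column of the invertible matrix $W(G)$ must equal the identity.
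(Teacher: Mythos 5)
Your proof is correct. The paper itself does not prove this lemma (it is quoted from Wang and Xu~\cite{W1}), and your argument is exactly the standard one: identify elements of $\mathcal{Q}(G)$ with generalized cospectral mates via Lemma~\ref{lortho}, then handle the forward direction by comparing $Q$ with the permutation matrix $P$ realizing the isomorphism. The only stylistic difference is that the forward direction can be finished in one line by the uniqueness clause of Lemma~\ref{lortho} (both $Q$ and $P$ are regular rational orthogonal matrices conjugating $A(G)$ to $A(H_Q)$, hence $Q=P$); your explicit argument that $R=QP^{\T}$ commutes with $A(G)$, fixes $e$, and therefore fixes the invertible matrix $W(G)$, is just a self-contained re-derivation of that uniqueness, as you yourself note.
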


According to the above lemma, in order to show that $G$ is DGS, it suffices to show that every $Q\in {\mathcal{Q}(G)}$ is a permutation matrix.
In order to do so, the following notion is proved to be useful.

\begin{Defi}[Wang and Xu~\cite{W1}] The level of a rational matrix $Q$, denoted by $\ell (Q)$ or $\ell$, is the smallest positive integer $k$ such that $kQ$ is an integral matrix.
\end{Defi}

It is clear that a regular rational orthogonal matrix is a permutation matrix if and only if $\ell(Q)=1$.

 The following lemma will be frequently used in the sequel.

\begin{lemma}\label{rem1}
 Let $X$ and $Y$ be two non-singular integral matrices such that $QX=Y$. Then $\ell\mid \gcd(d_n(X),d_n(Y))$,
 where $d_n(X)$ (resp. $d_n(Y)$) is $n$-th invariant factor of $X$ (resp. $Y$).

   \end{lemma}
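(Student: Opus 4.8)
The plan is to reduce the whole statement to one elementary fact about the Smith Normal Form: if $M$ is a non-singular integral $n\times n$ matrix with invariant factors $d_1(M)\mid d_2(M)\mid\cdots\mid d_n(M)$, then $d_n(M)\,M^{-1}$ is again an \emph{integral} matrix. I would prove this by writing $M=USV$ with $U,V$ unimodular and $S=\diag(d_1(M),\ldots,d_n(M))$ as in the SNF discussion above; then $M^{-1}=V^{-1}S^{-1}U^{-1}$, and $d_n(M)S^{-1}=\diag\big(d_n(M)/d_1(M),\ldots,d_n(M)/d_n(M)\big)$ has integer entries precisely because $d_i(M)\mid d_n(M)$ for every $i$. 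Left- and right-multiplying by the integral matrices $V^{-1}$ and $U^{-1}$ preserves integrality, giving the claim.

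Granting this, here is how I would finish. Since $Q$ is the (regular rational orthogonal) matrix of Lemma~\ref{lortho}, it is invertible with $Q^{-1}=Q^{\T}$, and clearly $\ell(Q^{\T})=\ell(Q)=\ell$ because a rational matrix is integral iff its transpose is. From $QX=Y$ and non-singularity of $X$ we get $Q=YX^{-1}$, hence
$$d_n(X)\,Q=Y\cdot\big(d_n(X)\,X^{-1}\big),$$
which is a product of integral matrices, hence integral; by the definition of the level, $\ell\mid d_n(X)$. Symmetrically, $QX=Y$ gives $Q^{-1}=XY^{-1}$, so
$$d_n(Y)\,Q^{-1}=X\cdot\big(d_n(Y)\,Y^{-1}\big)$$
is integral, whence $\ell=\ell(Q^{-1})\mid d_n(Y)$. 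Combining the two divisibilities yields $\ell\mid\gcd(d_n(X),d_n(Y))$, as required.

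The argument is short and contains no serious obstacle; the only points needing a little care are the SNF fact that $d_n(M)M^{-1}$ is integral (which is where the divisibility chain $d_i\mid d_n$ is used) and, for the second divisibility, the remark that replacing $Q$ by $Q^{-1}=Q^{\T}$ leaves the level unchanged — this is exactly the place where the orthogonality of $Q$ enters, and without it the conclusion $\ell\mid d_n(Y)$ genuinely fails.
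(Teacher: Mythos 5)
Your proof is correct and follows essentially the same route as the paper: both extract integrality of $d_n(X)Q$ from the Smith Normal Form decomposition $X=USV$ together with the divisibility chain $d_i\mid d_n$, and both handle the second divisibility by passing to $Q^{-1}=Q^{\T}$ (so $Q^{\T}Y=X$) and using $\ell(Q^{\T})=\ell(Q)$. Your closing remark that orthogonality is genuinely needed for $\ell\mid d_n(Y)$ is a nice touch, but the argument itself is the paper's argument.
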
\label{level1}
   \begin{proof} Suppose that $X=USV$, where $S={\rm diag}(d_1(X),d_2(X),\ldots,d_n(X))$ is the SNF of $X$, and $U$ and $V$ are unimodular matrices. Then we have $$Q=YV^{-1}{\rm diag}(d_1^{-1}(X),d_2^{-1}(X),\ldots,d_n^{-1}(X))U^{-1},$$
  and hence $d_n(X)Q$ is an integral matrix. By the minimality of $\ell$, we get that $\ell\mid d_n(X)$. Similarly, noting that $Q^{\T}Y=X$ and $\ell(Q)=\ell(Q^{\T})$, we have $\ell\mid d_n(Y)$. So the lemma follows.
   \end{proof}

Suppose $Q\in {\mathcal{Q}(G)}$ with level $\ell$, then $Q^{\T}A(G)Q=A(H)$ for some graph $H$. It follows that $Q^{\T}A^k(G)e=A^k(H)e$ for $k=0,1,\ldots,n-1$, which gives
that $Q^{\T}W(G)=W(H)$. By Lemma~\ref{level1}, we get that $\ell\mid \gcd(d_n(W(G)),d_n(W(H)))$. Thus, the $n$-th invariant factor of $W(G)$ (resp. $W(H)$) provides useful information about the level of $Q$. How to sharpen this observation to give more information about $\ell$ will be the main focus of this paper.

In \cite{wang2013EJC} and \cite{wang2017JCTB}, the author was able to establish
the following results.

\begin{lemma}[Wang~\cite{wang2013EJC}]\label{oddwang}
Let $Q\in {\mathcal{Q}(G)}$ with level $\ell$, and $p$ be odd prime. If $p\mid \det W(G)$ and $p^2\nmid \det W(G)$, then $p\nmid \ell$.

\end{lemma}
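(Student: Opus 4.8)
The plan is to argue by contradiction, combining the Smith normal form of $W(G)$ localised at $p$ with a computation modulo $p$ involving the matrix $pQ$. Suppose $Q\in\mathcal{Q}(G)$ has level $\ell$, and let $H$ be the graph with $Q^{\T}A(G)Q=A(H)$, so that $Q^{\T}W(G)=W(H)$ and, since $\det Q=\pm1$, also $\det W(H)=\pm\det W(G)$; hence $p\mid\det W(H)$ and $p^{2}\nmid\det W(H)$. By Lemma~\ref{rem1}, $\ell\mid\gcd(d_{n}(W(G)),d_{n}(W(H)))$, hence $\ell\mid d_{n}(W(G))$; since $v_{p}(\det W(G))=1$, the $p$-part of $d_{n}(W(G))$ is exactly $p$, so $v_{p}(\ell)\le 1$. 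It therefore suffices to rule out $v_{p}(\ell)=1$, which I assume henceforth. Then every entry of $Q$ has $p$-adic valuation at least $-1$, so $N:=pQ$ is $p$-integral and $\bar N:=N\bmod p$ is nonzero (otherwise $v_{p}(\ell)=0$).

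Next I would extract the structure of $\bar N$ over $\mathbb{F}_{p}$. From $QQ^{\T}=Q^{\T}Q=I$, $Qe=Q^{\T}e=e$, $Q^{\T}W(G)=W(H)$ and $QW(H)=W(G)$ one obtains $NN^{\T}=N^{\T}N=p^{2}I$, $Ne=N^{\T}e=pe$, $N^{\T}W(G)=pW(H)$ and $NW(H)=pW(G)$; reduction mod $p$ kills every right-hand side. As $v_{p}(\det W(G))=1$, $\overline{W(G)}$ has rank $n-1$ over $\mathbb{F}_{p}$, so its left null space is a line $\langle a\rangle$; the relation $\bar N^{\T}\overline{W(G)}=0$ then says that every column of $\bar N$ lies on this line, i.e.\ $\bar N=ab^{\T}$ with $b\neq 0$, and $\bar N\,\overline{W(H)}=0$ forces $b$ to span the (one-dimensional) left null space of $\overline{W(H)}$. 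Unwinding the remaining identities, $\bar N\bar N^{\T}=\bar N^{\T}\bar N=0$ gives $a^{\T}a=b^{\T}b=0$ and $\bar Ne=\bar N^{\T}e=0$ gives $a^{\T}e=b^{\T}e=0$; thus $a$ is a nonzero isotropic vector lying in the orthogonal complement of the column space $V$ of $\overline{W(G)}$.

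The third step pins $a$ down. From $A(G)W(G)=W(G)C$, with $C$ the integral companion matrix of the characteristic polynomial of $A(G)$, reduction mod $p$ shows $V$ is $\overline{A(G)}$-invariant; since $\overline{A(G)}$ is symmetric, $V^{\perp}=\langle a\rangle$ is invariant too, so $\overline{A(G)}a=\mu a$ for some $\mu\in\mathbb{F}_{p}$ (and likewise $\overline{A(H)}b=\nu b$). If $a\notin V$ then $\mathbb{F}_{p}^{\,n}=V\oplus\langle a\rangle$, while $a\perp V$ and $a^{\T}a=0$ force $a=0$, a contradiction; so we may assume $a\in V$. Then $a=h(\overline{A(G)})e$ for some polynomial $h$, and $\overline{A(G)}a=\mu a$ forces $h$ to be a nonzero scalar multiple of $m_{e}(x)/(x-\mu)$, where $m_{e}$, the minimal polynomial of $e$ with respect to $\overline{A(G)}$, has degree $n-1$; by the same argument applied to $H$, $b=h'(\overline{A(H)})e$ with $h'$ a scalar multiple of $m'_{e}(x)/(x-\nu)$.

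The surviving case $a\in V$ is where the real work is, and it is exactly here that the oddness of $p$ enters: over $\mathbb{F}_{2}$ the conditions $a^{\T}a=0$ and $a^{\T}e=0$ coincide, which is why there is no analogue at $p=2$. To close it I would use the second-order content of $NN^{\T}=p^{2}I$: writing $N=ab^{\T}+pR$ with $R$ $p$-integral and $b^{\T}b=p\beta$, comparing the coefficients of $p$ in $NN^{\T}=p^{2}I$ forces $\overline{Rb}=\lambda\bar a$ for some $\lambda$ with $\beta+2\lambda\equiv 0\pmod p$ (the factor $2$ being invertible precisely because $p$ is odd); substituting back and comparing coefficients of $p^{2}$ yields $RR^{\T}\equiv I-as^{\T}-sa^{\T}-\gamma\,aa^{\T}\pmod p$, and symmetrically $R^{\T}R\equiv I-bt^{\T}-tb^{\T}-\gamma'bb^{\T}\pmod p$. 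Combining these with $a=h(\overline{A(G)})e$, $b=h'(\overline{A(H)})e$ and $N^{\T}A(G)N=p^{2}A(H)$, and exploiting the cyclic-module structure of $V$ and of the column space of $\overline{W(H)}$, should force $a=b=0$, the desired contradiction. I expect the crux to be organising this last computation so that it actually closes — producing a genuine contradiction rather than an ever-lengthening list of relations among $a$, $b$, $R$, $\mu$ and $\nu$ — and this is the step I would devote the most effort to.
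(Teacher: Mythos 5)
Your set-up is correct as far as it goes: the reduction to ruling out $v_p(\ell)=1$, the fact that $\bar N=\overline{pQ}$ satisfies $\bar N^{\T}\overline{W(G)}=\bar N\overline{W(H)}=0$, $\bar N\bar N^{\T}=\bar N^{\T}\bar N=0$, $\bar Ne=\bar N^{\T}e=0$, the resulting factorization $\bar N=ab^{\T}$ with $a,b\neq0$ isotropic and orthogonal to $e$, and the observation that $a$ (resp.\ $b$) is an eigenvector of $\bar A$ (resp.\ $\bar B$) lying in the cyclic module generated by $e$. But the proof stops exactly where the content of the lemma lies. Nothing established up to that point is self-contradictory — such configurations (a one-dimensional radical of the walk space spanned by an isotropic eigenvector orthogonal to $e$) do occur, and indeed they are compatible with $p\mid\det W$, $p^{2}\nmid\det W$; the contradiction has to come from the second-order (mod $p^{2}$) analysis. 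That is precisely the part you leave as a plan: the vectors $s,t$ and scalars $\gamma,\gamma'$ are never defined, the claimed congruences for $RR^{\T}$ and $R^{\T}R$ are not derived, and you state that the computation ``should force $a=b=0$'' while conceding that organising it so that it actually closes is the step still to be done. So the decisive step — in effect showing that $W^{\T}x\equiv0\pmod{p^{2}}$ has a solution $x\not\equiv0\pmod p$, hence $p^{2}\mid\det W(G)$ — is missing, and what you have is a correct preparation, not a proof.

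For comparison: the paper does not reprove this lemma; it cites Wang~\cite{wang2013EJC}, whose strategy (recorded in the Remark following the lemma) is exactly the mod $p^{2}$ argument you would need to reconstruct. Within the present paper the statement also follows from the stronger Theorem~\ref{Main}~(i), whose proof avoids the mod $p^{2}$ bookkeeping altogether: using the same eigenvector observation you make about the columns of $\ell Q$ (Lemma~\ref{lemodd}), one perturbs $A$ to $A+t_{0}J$ so that the relevant characteristic polynomial has no two distinct roots over $\mathbb{F}_{p}$, deduces that $W_{t_{0}}(G)$ and $W_{t_{0}}(H)$ have the same null space mod $p$, and then passes to the modified walk matrix $\hat W_{t_{0}}$, applying Lemma~\ref{lSNF} to divide out one factor of $p$ from $d_{n}$ and Lemma~\ref{rem1} to bound $\ell$. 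If you wish to salvage your attempt, either carry out the mod $p^{2}$ computation of \cite{wang2013EJC} in full, or redirect your eigenvector set-up into this null-space/SNF route.
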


\begin{lemma}[Wang~\cite{wang2017JCTB}]\label{evenwang}
Let $Q\in {\mathcal{Q}(G)}$ with level $\ell$. Suppose that $2^{\lfloor n/2\rfloor}\mid\det W(G)$ and $2^{\lfloor n/2\rfloor+1}\nmid\det W(G)$. Then $\ell$ is even.

\end{lemma}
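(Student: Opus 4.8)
The plan is to read the hypothesis as an \emph{extremal} statement over $\mathbb{F}_2$ and then to run a reduction-modulo-$2$ argument on $Q$. For any graph one has $v_2(\det W(G))\ge\lfloor n/2\rfloor$: over $\mathbb{F}_2$ the identity $e^{\T}A(G)^k e\equiv 0\pmod{2}$ for every $k\ge 1$ (a short parity argument using walk reversal together with $e^{\T}A^{2i}e\equiv e^{\T}A^{i}e$ over $\mathbb{F}_2$) forces the Gram matrix of $e,A e,\dots,A^{r-1}e$ to have rank at most one, whence $\rank_{\mathbb{F}_2}W(G)=r\le\lceil n/2\rceil$ and at least $\lfloor n/2\rfloor$ invariant factors are even. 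The assumption that $2^{\lfloor n/2\rfloor}\mid\det W(G)$ while $2^{\lfloor n/2\rfloor+1}\nmid\det W(G)$ says this bound is \emph{attained}: exactly $\lfloor n/2\rfloor$ invariant factors are even, each of $2$-adic valuation $1$, so the SNF of $W(G)$ is $\diag(1,\dots,1,2,\dots,2,2m)$ with $\lceil n/2\rceil$ ones, $\rank_{\mathbb{F}_2}W(G)=\lceil n/2\rceil$, and the left null space $\mathcal{N}_G=\{x\in\mathbb{F}_2^{\,n}:x^{\T}\overline{W(G)}=0\}$ has dimension exactly $\lfloor n/2\rfloor$.

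Next I would record the relations available. Since $Q\in\mathcal{Q}(G)$, Lemma~\ref{lortho} gives $Q^{\T}A(G)Q=A(H)$ for some $H$ generalized cospectral with $G$, hence $Q^{\T}W(G)=W(H)$; because generalized cospectral graphs share all walk counts $e^{\T}A^k e$, one also has $W(G)^{\T}W(G)=W(H)^{\T}W(H)$ and $|\det W(H)|=|\det W(G)|$, so $H$ satisfies the same extremal condition and $\overline{W(H)}$ has a left null space $\mathcal{N}_H$ of dimension $\lfloor n/2\rfloor$. Lemma~\ref{level1} already yields $\ell\mid\gcd(d_n(W(G)),d_n(W(H)))$; the task is to sharpen this at the prime $2$ and show $2\mid\ell$.

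The core of the argument is to suppose $\ell$ is odd and deduce that $Q$ can only be a permutation matrix, i.e. to prove the dichotomy ``$Q$ is a permutation or $2\mid\ell$'', which is exactly the assertion for the non-permutation $Q$ relevant to the application. If $\ell$ is odd then it is a unit modulo $2$, and from $Q^{\T}Q=I$ we get $(\ell Q)^{\T}(\ell Q)=\ell^2 I\equiv I\pmod{2}$, so $\bar{Q}:=\overline{\ell Q}$ is a regular orthogonal matrix over $\mathbb{F}_2$, in particular invertible, with $\bar{Q}e=e$. Reducing $Q^{\T}W(G)=W(H)$ modulo $2$ gives $\bar{Q}^{\T}\,\overline{W(G)}=\overline{W(H)}$, so $\bar{Q}$ carries $\mathcal{N}_G$ onto $\mathcal{N}_H$ isometrically for the standard form; since both already have the maximal dimension $\lfloor n/2\rfloor$ forced by the extremal condition, they coincide with the radical of the degenerate Gram form and are stabilized by $\bar{Q}$. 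The plan is then to use this rigidity, together with $\bar{Q}e=e$ and the equality $\overline{W(G)^{\T}W(G)}=\overline{W(H)^{\T}W(H)}$, to conclude that $\bar{Q}$ fixes a spanning set and is the identity, so that $Q$ reduces to a permutation; contrapositively, a genuine (non-permutation) $Q\in\mathcal{Q}(G)$ must have $2\mid\ell$.

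The step I expect to be the main obstacle is precisely this last rigidity claim. Over $\mathbb{F}_2$ the standard bilinear form is degenerate, so $\bar{Q}$ lies in the orthogonal group of a singular form and is not in general determined by its action on a maximal (near-)isotropic subspace; extracting ``$\bar{Q}=\mathrm{id}$'' from the coincidence $\mathcal{N}_G=\bar{Q}(\mathcal{N}_H)$ and the equality of the Gram matrices is the delicate part. I anticipate that closing this gap will require the \emph{full} Smith normal form of $W(G)$ --- the precise staircase of invariant factors, not merely the value $v_2(\det W)$ --- and this is exactly the point at which the sharp hypothesis $2^{\lfloor n/2\rfloor+1}\nmid\det W(G)$ must be used in full strength rather than the rank count alone.
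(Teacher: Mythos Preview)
There are two issues, and the first is decisive regardless of the second.

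\textbf{The target is a typo.} The lemma as printed says ``$\ell$ is even'', but the intended (and true) conclusion is ``$\ell$ is \emph{odd}'' (i.e.\ $2\nmid\ell$). This is clear from the surrounding text: immediately after the lemma the paper says ``combining the above two lemmas, we have $\ell=1$'', which only makes sense if Lemma~\ref{evenwang} rules out $2\mid\ell$. It is also witnessed by the example in Section~4, where the hypothesis $2^{\lfloor n/2\rfloor}\,\|\,\det W(G)$ holds (with $n=12$) and yet a non-permutation $Q\in\mathcal{Q}(G)$ with level $\ell=5$ exists. You have read the misprint literally and set out to prove the dichotomy ``$Q$ is a permutation or $2\mid\ell$''. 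That dichotomy is \emph{false}: the example just cited is a counterexample. Consequently, no completion of the ``rigidity'' step you flag as the main obstacle can exist; the obstacle is not technical but a wrong goal.

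\textbf{There is no proof in the paper to compare with.} The paper does not prove Lemma~\ref{evenwang}; it quotes it from \cite{wang2017JCTB} and explicitly says in Remark~1 that ``the strategy of proving Lemma~\ref{evenwang} is too involved to describe here.'' The present paper instead proves the \emph{stronger} statement Theorem~\ref{Main}(ii) by a different route: once $\rank_2 W(G)=\lceil n/2\rceil$ (which is equivalent to your SNF description), Lemma~\ref{evenM} produces an explicit polynomial $M$ (built from the even-indexed coefficients of the characteristic polynomial) with $M(G)e\equiv M(H)e\equiv 0\pmod 2$ for \emph{every} generalized cospectral pair; this yields $Q^{\T}\hat W(G)=\hat W(H)$ with $d_n(\hat W(G))=d_n(W(G))/2$, and Lemma~\ref{rem1} then gives $\ell\mid d_n/2$, hence $2\nmid\ell$. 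If you want to recover Lemma~\ref{evenwang} along the paper's lines, that is the argument to emulate --- not a mod-$2$ orthogonal-group rigidity claim, which, as the example shows, cannot hold.
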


\begin{rem} In~\cite{wang2013EJC}, the author proved Lemma~\ref{oddwang} using the following strategy: Let $Q\in {\mathcal{Q}(G)}$ with level $\ell$. Suppose that $p$ is an odd prime and $p\mid \ell$. Then the author was able to show that the congruence equation $W^{\T}x\equiv 0~({\rm mod}~p^2)$ has a non-trivial solution $x\not\equiv 0~({\rm mod}~p)$, which implies $p^2\mid \det W(G)$. This contradicts the assumption of the lemma. While in~\cite{wang2017JCTB}, the strategy of proving Lemma~\ref{evenwang} is too involved to describe here; we refer the reader to the original paper.

\end{rem}
 Now suppose that $G$ is a graph such that $\frac{\det W(G)}{2^{\lfloor n/2\rfloor}}$ is odd and square-free. Let $Q\in {\mathcal{Q}(G)}$ with level $\ell$. Then, combining the above two lemmas, we have $\ell=1$ and hence $Q$ is a permutation matrix and $G$ is DGS.
Thus Theorem~\ref{thm1} follows.

 Unlike the methods in~\cite{wang2013EJC} and \cite{wang2017JCTB}, in this paper, we shall give a stronger version of Theorem~\ref{thm1} using a totally different approach.

We use $\rank_{p} X$ throughout to denote the rank of an integral matrix $X$ over the finite field $\mathbb{F}_p$.  We need the following lemma.

\begin{lemma}[Qiu et al.~\cite{QiuJiWangQ}]\label{rank}
 Let $r=\rank_{p}W(G)$. Then the first $r$ columns of $W(G)$ are linearly independent over $\mathbb{F}_{p}$.
\end{lemma}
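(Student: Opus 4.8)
The plan is to exploit the Krylov structure of $W(G)$: its columns are $e, Ae, A^2 e, \ldots, A^{n-1}e$ with $A=A(G)$, and multiplication by $A$ carries the $\mathbb{F}_p$-span of the first $k$ columns into the span of the columns $2,3,\ldots,k+1$. The crux is the elementary observation that once $A^k e$ lies in the $\mathbb{F}_p$-span of $e, Ae, \ldots, A^{k-1}e$, the same is true of $A^j e$ for every $j \ge k$; in other words, a cyclic (Krylov) subspace reduced modulo $p$ is still shift-stable.

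First I would set $k$ to be the largest integer, $0 \le k \le n$, for which the columns $e, Ae, \ldots, A^{k-1}e$ are linearly independent over $\mathbb{F}_p$. This is well defined: $e \ne 0$ gives $k \ge 1$, and since any subset of a linearly independent set is again linearly independent, ``the first $j$ columns of $W(G)$ are $\mathbb{F}_p$-independent'' holds exactly for $1 \le j \le k$. If $k = n$ there is nothing to prove, so assume $k < n$. Then the first $k+1$ columns are linearly dependent, say $\sum_{i=0}^{k} a_i A^i e \equiv 0 \pmod{p}$ nontrivially; here $a_k \not\equiv 0$, for otherwise the first $k$ columns would be dependent, and since $\mathbb{F}_p$ is a field we may solve for $A^k e$ to obtain $A^k e \equiv \sum_{i=0}^{k-1} c_i A^i e \pmod{p}$ for suitable $c_i \in \mathbb{F}_p$.

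Next I would show by induction on $t \ge 0$ that $A^{k+t} e$ belongs to $V := \langle e, Ae, \ldots, A^{k-1}e \rangle$, the $\mathbb{F}_p$-span of the first $k$ columns. The case $t = 0$ is the relation just displayed. Assuming $A^{k+t} e = \sum_{i=0}^{k-1} b_i A^i e$ in $\mathbb{F}_p^n$, multiplying by $A$ gives $A^{k+t+1} e = \sum_{i=0}^{k-1} b_i A^{i+1} e = \sum_{i=1}^{k} b_{i-1} A^i e$; each $A^i e$ with $1 \le i \le k-1$ lies in $V$ trivially, while $A^k e \in V$ by the base case, so $A^{k+t+1} e \in V$. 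Consequently every column of $W(G)$ lies in $V$, whence $\rank_p W(G) \le \dim_{\mathbb{F}_p} V = k$; the reverse inequality $\rank_p W(G) \ge k$ holds because the first $k$ columns are independent. Thus $r = \rank_p W(G) = k$, and the first $r$ columns of $W(G)$ are linearly independent over $\mathbb{F}_p$ by the very choice of $k$.

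I do not anticipate a real obstacle here: the only point requiring a little care is the remark that the indices $j$ for which the first $j$ columns are $\mathbb{F}_p$-independent form an initial segment $\{1,\ldots,k\}$ --- this is what legitimizes both the definition of $k$ and the non-vanishing of the leading coefficient $a_k$ --- after which the shift/induction step is purely mechanical.
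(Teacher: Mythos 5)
Your proof is correct: the observation that the set of indices $j$ with the first $j$ columns independent is an initial segment, followed by the shift/induction step showing every later Krylov vector $A^{k+t}e$ stays in the span of the first $k$ columns, is exactly the standard argument for this lemma. The paper itself only quotes the result from Qiu et al.\ without reproducing a proof, and your argument is essentially the one given in that cited source, so there is nothing to add.
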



 By Lemma~\ref{rank}, we know that $A^{r}e$ can be uniquely expressed as the linear combination of $e,Ae,\ldots,A^{r-1}e$, in other words, there exist $r$ integers $a_{0},a_{1},\ldots,a_{r-1}$ such that $a_{0}e+a_{1}Ae+\cdots+a_{r-1}A^{r-1}e+A^{r}e=0$, over $\mathbb{F}_{p}$. Now we introduce a new matrix $M=M(G)$, which plays a critical role in proving our main theorem (Theorem~\ref{Main}).
\begin{Defi}\label{defM} For a graph $G$ with adjacency matrix $A$, define $M=M(G):=a_{0}I+a_{1}A+\cdots+a_{r-1}A^{r-1}+A^{r}$.
\end{Defi}

\begin{rem} \textup{We would like to mention that a similar matrix $M$ was considered in Wang~\cite{Wangan} over $\mathbb{F}_{2}$.}
\end{rem}

\begin{Defi} With the above $M$, we introduce two matrices $\bar{W}(G)$ and $\hat{W}(G)$ as follows:
 $$\bar{W}=\bar{W}(G):=[e,Ae,\ldots,A^{r-1}e,Me,AMe,\ldots,A^{n-r-1}Me],$$ and $$\hat{W}=\hat{W}(G):=[e,Ae,\ldots,A^{r-1}e,\frac{Me}p,\frac{AMe}p,\ldots,\frac{A^{n-r-1}Me}p].$$
\end{Defi}
Clearly $\hat{W}$ is an integral matrix since $Me\equiv~0~({\rm mod}~p)$. The following lemma gives a relationship between the SNF of $W$ and that of $\hat{W}$, which generalizes a result on Eulerian graphs in~\cite{QJWEulerian}.

\begin{lemma}\label{lSNF}
Let $p$ be a prime. Suppose that the ${\rm SNF}$ of $W$ is \begin{equation}\label{SNF1}
 \textup{diag}(d_1,d_2,\ldots,d_r,d_{r+1},\ldots,d_n),
  \end{equation}
  where $p\nmid d_r$ and $p\mid d_{r+1}$. Then the ${\rm SNF}$ of $\hat{W}$ is $\diag(d_1,d_2,\ldots,d_r,\frac{d_{r+1}}p,\ldots,\frac{d_n}p).$
\end{lemma}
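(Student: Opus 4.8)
The plan is to exhibit $\hat W$, $\bar W$ and $W$ as three matrices differing only by operations whose effect on the Smith Normal Form can be controlled one prime at a time. First I would record the two basic relations. Writing $D=\diag(\underbrace{1,\dots,1}_{r},\underbrace{p,\dots,p}_{n-r})$, the definitions of $\bar W$ and $\hat W$ give $\bar W=\hat W D$. Moreover $\bar W=WR$ for an integral matrix $R$: its first $r$ columns are the unit vectors $e_1,\dots,e_r$ (since the first $r$ columns of $\bar W$ agree with those of $W$), while its $(r{+}1{+}k)$-th column expresses $A^{k}Me=\sum_{i=0}^{r}a_i A^{k+i}e$ (with $a_r=1$) in the column basis $e,Ae,\dots,A^{n-1}e$ of $W$, hence is supported in positions $k{+}1,\dots,k{+}r{+}1$ with a $1$ in position $k{+}r{+}1$. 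Thus $R=\left(\begin{smallmatrix} I_r & \ast\\ 0 & L\end{smallmatrix}\right)$ with $L$ upper triangular and unit diagonal, so $R$ is unimodular and $W$ and $\bar W$ have the same SNF. Since the SNF is determined prime by prime (the $q$-part of the $i$-th invariant factor is read off from the SNF over the localization $\mathbb{Z}_{(q)}$), it suffices to compare the $q$-parts of the invariant factors of $W$ and $\hat W$ for each prime $q$. For $q\neq p$ the matrix $D$ lies in $GL_n(\mathbb{Z}_{(q)})$, its determinant $p^{n-r}$ being a $q$-adic unit, so $\hat W=WRD^{-1}$ with $RD^{-1}\in GL_n(\mathbb{Z}_{(q)})$; hence the $q$-parts of the invariant factors of $\hat W$ and $W$ coincide, exactly as the claimed SNF asserts at $q$. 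So the entire content of the lemma is its behaviour at the prime $p$.

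For the prime $p$ I would work over $\mathbb{Z}_{(p)}$. By Lemma~\ref{rank} the columns $e,Ae,\dots,A^{r-1}e$ are independent over $\mathbb{F}_p$, so they extend to an $\mathbb{F}_p$-basis of $\mathbb{F}_p^{\,n}$; lifting the completing vectors arbitrarily yields a matrix $C$ whose first $r$ columns are $e,Ae,\dots,A^{r-1}e$ and whose determinant is a unit mod $p$, i.e. $C\in GL_n(\mathbb{Z}_{(p)})$. Since $C^{-1}$ sends the first $r$ columns of $\hat W$ to $e_1,\dots,e_r$, we get $C^{-1}\hat W=\left(\begin{smallmatrix} I_r & X\\ 0 & Y\end{smallmatrix}\right)$ for some $X\in\mathbb{Z}_{(p)}^{\,r\times(n-r)}$, $Y\in\mathbb{Z}_{(p)}^{\,(n-r)\times(n-r)}$, and therefore $C^{-1}\bar W=C^{-1}\hat W D=\left(\begin{smallmatrix} I_r & pX\\ 0 & pY\end{smallmatrix}\right)$. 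A single block column operation (right multiplication by $\left(\begin{smallmatrix} I_r & -X\\ 0 & I_{n-r}\end{smallmatrix}\right)$, resp. $\left(\begin{smallmatrix} I_r & -pX\\ 0 & I_{n-r}\end{smallmatrix}\right)$, both unimodular) clears the top-right block, so over $\mathbb{Z}_{(p)}$ one has $\hat W\sim\diag(I_r,Y)$ and $\bar W\sim\diag(I_r,pY)$. As $\bar W$ (hence $W$, being of full rank) is nonsingular, so is $Y$, and over the local ring $\mathbb{Z}_{(p)}$ it has $\mathrm{SNF}$ $\diag(p^{\epsilon_1},\dots,p^{\epsilon_{n-r}})$ with $0\le\epsilon_1\le\cdots\le\epsilon_{n-r}$.

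Reading off invariant factors then finishes the proof: over $\mathbb{Z}_{(p)}$, $\bar W$ has invariant factors $1,\dots,1,p^{1+\epsilon_1},\dots,p^{1+\epsilon_{n-r}}$ and $\hat W$ has invariant factors $1,\dots,1,p^{\epsilon_1},\dots,p^{\epsilon_{n-r}}$; that is, the $p$-part of the $i$-th invariant factor of $\hat W$ equals that of $\bar W$ for $i\le r$ and is $p^{-1}$ times it for $i>r$. Since $W$ and $\bar W$ share the SNF $\diag(d_1,\dots,d_n)$ with $p\nmid d_r$ and $p\mid d_{r+1}$, combining this with the case $q\neq p$ above yields $d_i(\hat W)=d_i$ for $i\le r$ and $d_i(\hat W)=d_i/p$ for $i>r$; this is a legitimate SNF because $\gcd(d_r,p)=1$ and $p\mid d_{r+1}$ force $d_r\mid d_{r+1}/p$. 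The only genuinely non-formal point is producing the completion $C$ over $\mathbb{Z}_{(p)}$ and keeping the localization bookkeeping straight; once $C$ is available everything is block row/column reduction, and the unimodularity of $R$ in Step~1 is a routine triangularity computation.
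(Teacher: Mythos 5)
Your argument is correct, and it reaches the lemma by a route that differs from the paper's at the key step. You share the first move with the paper (showing $\bar W=WR$ with $R$ unimodular, which you make explicit via the block-triangular shape of $R$, whereas the paper just invokes elementary column operations), but after that you localize: you note the SNF can be assembled prime by prime, dispose of all $q\neq p$ by observing $D=\diag(I_r,pI_{n-r})$ is invertible over $\mathbb{Z}_{(q)}$, and at $p$ you use Lemma~\ref{rank} to build a completion matrix $C\in GL_n(\mathbb{Z}_{(p)})$ whose first $r$ columns are $e,Ae,\dots,A^{r-1}e$, so that $\hat W$ and $\bar W$ become $\diag(I_r,Y)$ and $\diag(I_r,pY)$ up to $GL_n(\mathbb{Z}_{(p)})$, and the comparison of $Y$ with $pY$ over the local ring finishes the proof. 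The paper instead stays global: it writes $\bar W=\bar U N\bar V$ with $N$ the SNF, partitions $\bar V$, and shows that dividing the last $n-r$ columns by $p$ amounts to replacing $\bar V$ by $\bar V'=\left(\begin{smallmatrix}V_1&V_2/p\\ pV_3&V_4\end{smallmatrix}\right)$, whose integrality follows from $p\nmid d_1,\dots,d_r$ and whose determinant is still $\pm1$, so $\hat W=\bar U\,\diag(\Delta,\Lambda)\,\bar V'$ is already an SNF decomposition. Your version costs a little machinery (localization, the local-global description of invariant factors, and Lemma~\ref{rank}, which the paper's proof of this lemma does not need), but it isolates cleanly why only the prime $p$ is affected and in fact shows the hypotheses $p\nmid d_r$, $p\mid d_{r+1}$ are automatic when $r=\rank_pW$; the paper's argument is shorter and entirely elementary over $\mathbb{Z}$. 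Do make sure to state, as you did in passing, that $d_r\mid d_{r+1}/p$ so the resulting diagonal matrix is a legitimate SNF — the paper leaves this implicit as well.
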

\begin{proof}
Note that $\bar{W}$ can be obtained from $W$ by applying a series of elementary column operations. Hence $\bar{W}$ and $W$ have the same SNF. Thus, there exist two unimodular matrices $\bar{U}$ and $\bar{V}$ such that $\bar{W}=\bar{U}N\bar{V}$, where $N$ is as shown in Eq.~\eqref{SNF1}. It follows that
\begin{equation}\label{eqeven2}
\begin{split}
\bar{U}^{-1}\bar{W} &=\diag(d_1,d_2,\ldots,d_r,d_{r+1},\ldots,d_n)\bar{V}\\
   &= \left(
                           \begin{array}{cc}
                             \Delta &  \\
                              & p\Lambda \\
                           \end{array}
                         \right)\left(
                                  \begin{array}{cc}
                                    V_{1} & V_{2} \\
                                    V_{3} & V_{4} \\
                                  \end{array}
                                \right) \\
   &=\left(\begin{array}{cc}
             \Delta V_{1} & \Delta V_{2} \\
             p\Lambda V_{3} & p\Lambda  V_{4} \\
             \end{array}
              \right),
\end{split}
\end{equation}
where $\Delta=\diag(d_1,d_2,\ldots,d_r)$ is a diagonal matrix of order $r$, $\Lambda=\diag(\frac{d_{r+1}}p,\ldots,\frac{d_n}p)$ is a diagonal matrix of order $n-r$, and
 $\bar{V}=\left(
 \begin{array}{cc}
 V_{1} & V_{2} \\
 V_{3} & V_{4} \\
 \end{array}
 \right)$ is the corresponding matrix partition of $\bar{V}$.

 Then Eq.~\eqref{eqeven2} can be rewritten as
$$[\bar{U}^{-1}e,\ldots,\bar{U}^{-1}A^{r-1}e,\bar{U}^{-1}Me,\ldots,\bar{U}^{-1}A^{n-r-1}Me] =\left(\begin{array}{cc}
             \Delta V_{1} & \Delta V_{2} \\
             p\Lambda V_{3} & p\Lambda V_{4} \\
             \end{array}
              \right),$$
and hence
$$[\bar{U}^{-1}e,\ldots,\bar{U}^{-1}A^{r-1}e,\frac{\bar{U}^{-1}Me}p,\ldots,\frac{\bar{U}^{-1}A^{n-r-1}Me}p] =\left(\begin{array}{cc}
             \Delta V_{1} & \frac{\Delta V_{2}}{p} \\
             p\Lambda V_{3} & \Lambda V_{4} \\
             \end{array}
              \right),$$
i.e.,
\begin{equation}\label{eqeven3}
  \bar{U}^{-1}\hat{W}(G)=\left(\begin{array}{cc}
             \Delta V_{1} & \frac{\Delta V_{2}}{p} \\
             p\Lambda V_{3} & \Lambda V_{4} \\
             \end{array}
              \right)=\left(\begin{array}{cc}
            \Delta & O \\
            O & \Lambda
             \end{array}\right)
            \left(\begin{array}{cc}
             V_{1} & \frac{V_{2}}{p} \\
             p V_{3} & V_{4}
              \end{array}\right).
\end{equation}
Let $\bar{V}^{'}=\left(\begin{array}{cc}
             V_{1} & \frac{V_{2}}{p} \\
             p V_{3} & V_{4} \\
             \end{array}\right)$. By the first equality of Eq.~\eqref{eqeven3}, we get that $\frac{\Delta V_{2}}{p}$ is an integral matrix, and so is $\frac{V_{2}}{p}$, since $p\nmid d_i$ for $i=1,2,\ldots,r$. Thus, $\bar{V}^{'}$ is an integral matrix.
Moreover, taking determinant on both sides of Eq.~\eqref{eqeven3} gives $\det \bar{V}^{'}=\pm 1$. Therefore, $\bar{V}^{'}$ is a unimodular matrix. Then the lemma follows immediately from the second equality of Eq.~\eqref{eqeven3}.

\end{proof}

\subsection{Main Results}

The main result of this paper is the following theorem.

\begin{them}\label{Main} Let $G$ be a graph with $\det W(G)\neq0$. Let $d_n=d_n(W(G))$ be the $n$-th invariant factor of $W(G)$. Suppose that $Q\in{\mathcal{Q}_G}$ with level $\ell$. Then we have:\\
\noindent
(i)   For an odd prime $p$, if $\rank_pW(G) =n-1$, then $\ell\mid \frac{d_n}{p}$;\\
(ii)  For $p=2$, if $\rank_2 W(G)=\lceil\frac{n}2\rceil$, then $\ell\mid \frac{d_n}{2}$.
\end{them}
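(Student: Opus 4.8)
The plan is to transfer everything onto the auxiliary matrix $\hat W=\hat W(G)$ and finish with Lemma~\ref{rem1}. By Lemma~\ref{lSNF} (applied with $r=\rank_pW(G)$, i.e. $r=n-1$ for odd $p$ and $r=\lceil n/2\rceil$ for $p=2$), $\hat W$ is a nonsingular integral matrix with $d_n(\hat W)=d_n/p$, so it suffices to prove that $Q^{\T}\hat W$ is \emph{integral}: then it is integral and nonsingular ($\det Q^{\T}\hat W=\pm\det\hat W\neq0$), and Lemma~\ref{rem1} applied to $Q^{\T}$ (which has the same level $\ell$ as $Q$) and the pair $\hat W,\ Q^{\T}\hat W$ gives $\ell\mid d_n(\hat W)=d_n/p$, as claimed. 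Now $Q^{\T}A(G)^{k}e=A(H)^{k}e$ (from $Q^{\T}A(G)Q=A(H)$ and $Qe=e$), so the first $r$ columns of $Q^{\T}\hat W$ are the integral vectors $e,A(H)e,\dots,A(H)^{r-1}e$; and writing $m(x)=x^{r}+a_{r-1}x^{r-1}+\cdots+a_{0}$ for a lift of the mod-$p$ minimal polynomial of $e$ relative to $A(G)$ and $M(H):=m(A(H))$, the remaining columns are $\frac1p A(H)^{j}M(H)e$. Hence $Q^{\T}\hat W$ is integral iff $M(H)e\equiv0\pmod p$, i.e. iff $v^{*}:=(a_{0},\dots,a_{r-1},1,0,\dots,0)^{\T}$ — which lies in $\ker_pW(G)$ by Lemma~\ref{rank} — also lies in $\ker_pW(H)$. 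So the theorem reduces to the null-space coincidence $\ker_pW(G)=\ker_pW(H)$ announced in the introduction. If $p\nmid\ell$ this is immediate, since then $\overline{Q^{\T}}:=\ell^{-1}\overline{\ell Q^{\T}}$ is defined over $\mathbb F_p$ and invertible (orthogonality), and $\overline{W(H)}=\overline{Q^{\T}}\,\overline{W(G)}$ forces the two null spaces to coincide.

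So assume $p\mid\ell$ and set $R=\ell Q^{\T}$, an integral matrix with $RR^{\T}=R^{\T}R=\ell^{2}I$, $Re=R^{\T}e=\ell e$ and $RA(G)=A(H)R$. Reducing mod $p$ (using $p\mid\ell$) yields $\overline R\,\overline{W(G)}=0$, $\overline{R^{\T}}\,\overline{W(H)}=0$, $\overline R\,\overline{R^{\T}}=\overline{R^{\T}}\,\overline R=0$, $\overline Re=\overline{R^{\T}}e=0$, $\overline R\,\overline{A(G)}=\overline{A(H)}\,\overline R$, and $\overline R\neq0$ by minimality of $\ell$. I would also keep at hand $W(G)^{\T}W(G)=W(H)^{\T}W(H)=:\mathbf W$ (because $Q$ is orthogonal and $W(H)=Q^{\T}W(G)$) and $\chi_{A(G)}=\chi_{A(H)}$ in $\mathbb Z[x]$ (because $A(G),A(H)$ are similar over $\mathbb Q$), together with the fact that $C:=\operatorname{col}_pW(G)$ is exactly the cyclic $\mathbb F_p[A(G)]$-module generated by $e$, so $C^{\perp}$ is $A(G)$-invariant and $\chi_{A(G)}$ equals $m(x)$ times the characteristic polynomial of $A(G)$ on $\mathbb F_p^{n}/C$.

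For \emph{odd} $p$ the finish is clean. From $\overline R\,\overline{W(G)}=0$ and $\dim C=n-1$ we get $\operatorname{rank}_p\overline R=1$, say $\overline R=\alpha\beta^{\T}$; then $\beta$ spans the one-dimensional $A(G)$-invariant space $C^{\perp}=\ker_pW(G)^{\T}$, so $\overline{A(G)}\beta=\nu\beta$ for some $\nu\in\mathbb F_p$ and, the eigenvalue on $C^{\perp}$ agreeing with that on $\mathbb F_p^{n}/C$ by self-adjointness, $\chi_{A(G)}\equiv m(x)(x-\nu)\pmod p$. From $\overline R\,\overline{R^{\T}}=0$ we get $\beta^{\T}\beta=0$, hence $\beta\in\beta^{\perp}=C$, hence $\operatorname{rank}_p\mathbf W=n-2$, hence $\operatorname{rank}_pW(H)\in\{n-2,n-1\}$. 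If it is $n-2$, then $\ker_pW(H)$ and $\ker_p\mathbf W$ both have dimension $2$ and $\ker_pW(H)\subseteq\ker_p\mathbf W$, so they coincide, and since $\ker_pW(G)\subseteq\ker_p\mathbf W$ we get $v^{*}\in\ker_pW(H)$. If it is $n-1$, substitute $\overline R=\alpha\beta^{\T}$ into $\overline R\,\overline{A(G)}=\overline{A(H)}\,\overline R$ and use $\beta^{\T}\overline{A(G)}=\nu\beta^{\T}$ to get $\overline{A(H)}\alpha=\nu\alpha$; since $\alpha$ spans the $A(H)$-invariant line $\ker_pW(H)^{\T}$, the same argument gives $\chi_{A(H)}\equiv m_H(x)(x-\nu)\pmod p$ with the \emph{same} $\nu$ (where $m_H$ is the mod-$p$ minimal polynomial of $e$ relative to $A(H)$), so $\chi_{A(G)}=\chi_{A(H)}$ forces $m_H=m$ and again $v^{*}\in\ker_pW(H)$.

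The case $p=2$ is the step I expect to be the main obstacle. The skeleton is the same, but now the rank hypothesis only gives $C\subseteq\ker_2\overline R$ with $\dim C=\lceil n/2\rceil$, so $\operatorname{rank}_2\overline R\le\lfloor n/2\rfloor$ and $\overline R$ need not have rank $1$; the scalar eigenvalue bookkeeping must be replaced by a subspace version in which $U:=\operatorname{col}_2\overline{R^{\T}}$ and $V:=\operatorname{col}_2\overline R$ are totally isotropic subspaces of $\mathbb F_2^{n}$, respectively $A(G)$- and $A(H)$-invariant (via $\overline R\,\overline{A(G)}=\overline{A(H)}\,\overline R$), both orthogonal to $e$, with $U\subseteq C^{\perp}$ and $V\subseteq(\operatorname{col}_2W(H))^{\perp}$. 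The extra structure available is that over $\mathbb F_2$ all odd walk-counts $e^{\T}A^{2k+1}e$ vanish, so $C$ is the sum of its mutually orthogonal ``even'' and ``odd'' cyclic pieces $\operatorname{span}\{A^{2i}e\}$ and $\operatorname{span}\{A^{2i+1}e\}$, together with the pairing $d_i(R)\,d_{n+1-i}(R)=\ell^{2}$ of the invariant factors of $R$ coming from $RR^{\T}=\ell^{2}I$. Feeding all of this, plus $\mathbf W=W(G)^{\T}W(G)=W(H)^{\T}W(H)$ and $\chi_{A(G)}=\chi_{A(H)}$, into the analogous comparison should again yield $\ker_2W(G)=\ker_2W(H)$ (in particular $v^{*}\in\ker_2W(H)$); the delicate point — where I expect the real work to lie — is carrying out this matching of $A(G)$- and $A(H)$-modules through $\overline R$ while keeping track of the bilinear form in characteristic $2$, which is alternating on $C$ rather than definite.
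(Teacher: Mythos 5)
Your overall frame (show that $Q^{\T}\hat{W}(G)$ is integral, then combine Lemma~\ref{lSNF} with Lemma~\ref{rem1}) is exactly the paper's strategy, and your treatment of part (i) is, as far as I can check, correct but genuinely different from the paper's. Where the paper passes to the perturbed matrices $A+tJ$ and proves (Lemmas~\ref{oddroot}, \ref{lemodd}, \ref{lemodd1}) that some $t_0$ makes $\phi(x,t_0)\equiv 0\pmod p$ have no two distinct roots, so that the two dependency vectors of $W_{t_0}(G)$ and $W_{t_0}(H)$ must coincide, you instead use that $\overline{\ell Q^{\T}}$ has rank one over $\mathbb{F}_p$, write it as $\alpha\beta^{\T}$, identify $\beta$ (resp.\ $\alpha$) as spanning the $A$- (resp.\ $B$-) invariant line $(\operatorname{col}_pW(G))^{\perp}$ (resp.\ $(\operatorname{col}_pW(H))^{\perp}$) with a common eigenvalue $\nu$, and cancel the factor $x-\nu$ in the common characteristic polynomial to force $m_H=m$; your degenerate subcase $\rank_pW(H)=n-2$ is also handled correctly via $W(G)^{\T}W(G)=W(H)^{\T}W(H)$ (the paper instead uses a $p$-adic valuation argument on $d_n(W(H))$). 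This avoids the perturbation machinery entirely and would be an acceptable alternative proof of part (i).

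Part (ii), however, is not proved: you explicitly leave the one statement the whole reduction hinges on for $p=2$ --- that the dependency vector $v^{*}$ of $W(G)$ also lies in the null space of $W(H)$ over $\mathbb{F}_2$ --- as a hope (``should again yield\dots''), and the structural facts you list (isotropy of the column space of $\overline{\ell Q}$, the invariant-factor pairing of $\ell Q$, the even/odd splitting of the cyclic module) give no mechanism to match the two dependency polynomials once $\operatorname{rank}_2\overline{\ell Q}$ can be as large as $\lfloor n/2\rfloor$ and the rank-one eigenvalue bookkeeping disappears. The paper's mechanism for $p=2$ is entirely different and much simpler, and it is the idea you are missing: since $c_i$ is even for odd $i$ (Lemma~\ref{sachs}), the matrix $M=A^{\lceil n/2\rceil}+c_2A^{\lceil (n-2)/2\rceil}+\cdots$ built from the coefficients of the characteristic polynomial satisfies $M^2\equiv 0\pmod 2$ by Cayley--Hamilton, and any symmetric integral $S$ with $S^2\equiv 0\pmod 2$ has $Se\equiv 0\pmod 2$ (Lemma~\ref{leM}); applying this to both $A(G)$ and $A(H)$ with the \emph{same} coefficients --- available because $G$ and $H$ are cospectral --- yields Eq.~\eqref{key1} directly, with no comparison of null spaces through $\overline{\ell Q}$ at all (and no rank hypothesis on $W(H)$). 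Without this, or a completed substitute for your module-matching sketch, your argument establishes only part (i) of the theorem.
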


As an immediate consequence, we have

\begin{Cor}Set $r=\lceil\frac{n}2\rceil$. Suppose that the Smith Normal Form of $W(G)$ is as follows:
$${\rm diag}(\underbrace{1,1,\ldots,1}_{r},\underbrace{2^{l_1},2^{l_2},\ldots,2^{l_{n-r-1}},2^{l_{n-r}}p_1^{m_1}p_2^{m_2}\cdots  p_s^{m_s}}_{n-r}),$$
where $p_i$'s are distinct odd primes for $1\leq i\leq s$. Suppose that $Q\in{\mathcal{Q}_G}$ with level $\ell$. Then $\ell\mid 2^{l_{n-r}-1}p_1^{m_1-1}p_2^{m_2-1}\cdots  p_s^{m_s-1}$.
\end{Cor}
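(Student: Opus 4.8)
The plan is to analyze the level $\ell$ of $Q\in\mathcal{Q}(G)$ prime by prime, working over $\mathbb{F}_p$, and to show in each case that the appearance of $p$ in $\ell$ forces a larger $p$-part of $d_n$ than is available. Fix a prime $p$ dividing $\ell$. Since $Q\in\mathcal{Q}(G)$ we have $Q^{\T}A(G)Q=A(H)$ for some graph $H$, hence $Q^{\T}W(G)=W(H)$ as observed in the text. The crucial intermediate claim I would isolate is that, under the rank hypothesis, the null spaces of $W(G)^{\T}$ and $W(H)^{\T}$ over $\mathbb{F}_p$ coincide, and moreover the matrix $M=M(G)$ from Definition~\ref{defM} is essentially the same as $M(H)$ modulo $p$ --- this is the ``new ingredient'' the introduction advertises. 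Concretely: $\rank_p W(G)=r$ (with $r=n-1$ when $p$ is odd, $r=\lceil n/2\rceil$ when $p=2$), so by Lemma~\ref{rank} the first $r$ columns $e,Ae,\ldots,A^{r-1}e$ are independent over $\mathbb{F}_p$ and $Me\equiv 0\pmod p$. Applying $Q^{\T}$ and using $Q^{\T}A^k e = A(H)^k e$ together with the fact that $Q^{\T}$ is a nonsingular $\mathbb{F}_p$-matrix on integer vectors once we clear denominators, one gets the analogous relation for $H$, so $r=\rank_p W(H)$ and $M(H)e\equiv 0\pmod p$ with the same coefficients $a_i$.

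Next I would pass to the matrices $\hat{W}=\hat{W}(G)$ and $\hat{W}(H)$ of Definition~1.? and invoke Lemma~\ref{lSNF}: writing the SNF of $W(G)$ as $\mathrm{diag}(d_1,\ldots,d_r,d_{r+1},\ldots,d_n)$ with $p\nmid d_r$ and $p\mid d_{r+1}$, the SNF of $\hat{W}(G)$ is $\mathrm{diag}(d_1,\ldots,d_r,d_{r+1}/p,\ldots,d_n/p)$, and similarly for $\hat W(H)$. In particular $d_n(\hat{W}(G)) = d_n/p$ and $d_n(\hat W(H)) = d_n(W(H))/p$; note $d_n(W(H))=d_n(W(G))=d_n$ since $W(G)$ and $W(H)$ have the same SNF (they are related by the unimodular-over-$\mathbb{Z}$? no --- by $Q^{\T}$, which is only rational, so this equality needs the separate argument that $d_n(W(H))=d_n$, which follows from $Q^{\T}W(G)=W(H)$ and $Q W(H)=W(G)$ via Lemma~\ref{rem1}-type reasoning, or more directly from generalized cospectrality). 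The heart of the argument is then to show $Q^{\T}\hat{W}(G)=\hat{W}(H)$: this amounts to checking $Q^{\T}(A^kMe/p) = A(H)^k M(H) e/p$, which holds because $Q^{\T}A^k Me = A(H)^k M(H)e$ as integer vectors (both sides being $\equiv 0 \bmod p$ by the previous paragraph), and then dividing by $p$. Granting this, Lemma~\ref{rem1} applied to the nonsingular integral matrices $\hat{W}(G)$ and $\hat{W}(H)$ with $Q^{\T}\hat W(G)=\hat W(H)$ yields $\ell = \ell(Q^{\T}) \mid \gcd(d_n(\hat W(G)), d_n(\hat W(H))) = d_n/p$, exactly as claimed in both parts.

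The main obstacle I expect is the $p=2$ case of the identity $Q^{\T}\hat W(G)=\hat W(H)$, because there the rank $r=\lceil n/2\rceil$ is genuinely smaller than $n-1$, so the ``tail'' columns $Me/2, AMe/2,\ldots$ form a block of size $n-r=\lfloor n/2\rfloor$ rather than a single column, and one must verify that $Q^{\T}$ carries this entire block of $H$-vectors correctly --- i.e. that the same polynomial $M$ works for $H$ over $\mathbb{F}_2$, not merely that $M(H)e\equiv 0$. This should follow from the uniqueness in Lemma~\ref{lortho} together with the observation that the minimal dependence among $e,Ae,\ldots$ over $\mathbb{F}_2$ is an invariant of the generalized-cospectral class, but making that rigorous --- in particular ruling out that $\rank_2 W(H)$ could differ from $\rank_2 W(G)$, or that the recurrence coefficients could change --- is the delicate point. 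A secondary subtlety is the well-definedness of $M$ itself (the coefficients $a_i$ are only determined mod $p$, while $M$ is an integer matrix); one has to check the final divisibility conclusion is independent of the integer lift chosen, which it is since changing the lift alters $\hat W$ by integral column operations. Once these points are settled, the Corollary is immediate: taking $r=\lceil n/2\rceil$ and the stated SNF, the hypothesis of part (ii) holds, and applying part (ii) for $p=2$ and part (i) for each odd $p_i$ (after localizing, i.e.\ noting $\rank_{p_i}W(G)=n-1$ since only the last invariant factor is divisible by $p_i$) and combining via the Chinese Remainder Theorem gives $\ell \mid 2^{l_{n-r}-1}p_1^{m_1-1}\cdots p_s^{m_s-1}$.
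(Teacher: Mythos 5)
Your closing deduction of the Corollary from Theorem~\ref{Main} is exactly the paper's (which is why the paper states it without proof): by Lemma~\ref{rem1}, $\ell\mid d_n$; the displayed SNF gives $\rank_{p_i}W(G)=n-1$ for each odd $p_i$ (only the last invariant factor is divisible by $p_i$) and $\rank_2W(G)=\lceil n/2\rceil$, so parts (i) and (ii) give $\ell\mid d_n/p_i$ and $\ell\mid d_n/2$; comparing $q$-adic valuations for each $q\in\{2,p_1,\ldots,p_s\}$ (this is a valuation argument, not really CRT) yields $\ell\mid 2^{l_{n-r}-1}p_1^{m_1-1}\cdots p_s^{m_s-1}$. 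The problem is the portion of your proposal that purports to establish Theorem~\ref{Main} itself. The step ``applying $Q^{\T}$ \ldots\ one gets the analogous relation for $H$, so $r=\rank_pW(H)$ and $M(H)e\equiv 0\ ({\rm mod}\ p)$ with the same coefficients $a_i$'' is a genuine gap, not a routine verification: in the only relevant case $p\mid\ell$, the matrix $Q$ has no reduction mod $p$, and clearing denominators gives $\ell Q$, which is \emph{singular} over $\mathbb{F}_p$ (indeed $\rank_p(\ell Q)=1$ when $\rank_pW(G)=n-1$, as follows from $W(G)^{\T}(\ell Q)\equiv 0$), so nothing transfers from $G$ to $H$ for free. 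In fact $\rank_pW(H)$ can be strictly smaller than $\rank_pW(G)$ --- the paper disposes of that case separately, directly through $d_n(W(H))$ --- and even when both ranks equal $n-1$, the two one-dimensional mod-$p$ null spaces need not coincide a priori. Forcing them to coincide is the entire content of the paper's Section 3.2: Lemma~\ref{oddroot} shows the last coordinate of each null vector is a root of $\phi(x)$ mod $p$, and Lemmas~\ref{lemodd} and~\ref{lemodd1} use the perturbation $A\mapsto A+tJ$, a pigeonhole argument over $t\in\mathbb{F}_p$, and an eigenvector-orthogonality argument (all under the standing hypothesis $p\mid\ell$) to produce a $t_0$ for which $\phi(x,t_0)$ has no two distinct roots mod $p$, whence the null vectors of $W_{t_0}(G)$ and $W_{t_0}(H)$ agree. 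None of this, nor any substitute for it, appears in your sketch.

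You also misplace the difficulty at $p=2$. There the paper needs neither the uniqueness in Lemma~\ref{lortho} nor any ``invariance of the minimal dependence'': it takes $M$ \emph{explicitly} to be the even part of the characteristic polynomial evaluated at $A$ (Lemma~\ref{evenM}); since generalized cospectral graphs have the same characteristic polynomial, the same coefficients automatically work for $H$, and $M(G)e\equiv M(H)e\equiv 0\ ({\rm mod}\ 2)$ follows from Lemma~\ref{leM} ($S^2\equiv 0\Rightarrow Se\equiv 0$ mod $2$) together with the parity of the odd-indexed coefficients (Lemma~\ref{sachs}); the argument you gesture at for this case is not a proof. A smaller slip: $W(G)$ and $W(H)$ need not have the same SNF (they are related only by a rational $Q$), but this is harmless, since $\ell\mid d_n(\hat W(G))$ alone, via Lemmas~\ref{lSNF} and~\ref{rem1}, already gives the conclusion. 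In sum, the corollary-level reduction matches the paper, but the odd-prime half of the theorem you rely on is not proved by your proposal.
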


\begin{Cor}[Wang~\cite{W1,wang2017JCTB}]
If $\frac{\det W(G)}{2^{\lfloor n/2 \rfloor}}$ is odd and square-free, then $G$ is DGS.
\end{Cor}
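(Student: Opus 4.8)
The plan is to derive the corollary as a direct specialization of Theorem~\ref{Main}, the only preparatory work being to convert the arithmetic hypothesis into information about the Smith Normal Form and the $p$-ranks of $W(G)$. First I would invoke the observation recorded immediately after Theorem~\ref{thm1}: the assumption that $\frac{\det W(G)}{2^{\lfloor n/2\rfloor}}$ is odd and square-free is equivalent to the SNF of $W(G)$ being
$$\diag(\underbrace{1,1,\ldots,1}_{\lceil n/2\rceil},\underbrace{2,2,\ldots,2,2m}_{\lfloor n/2\rfloor}),$$
where $m$ is odd and square-free. In particular $d_n = d_n(W(G)) = 2m$.

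Next I would read off the two rank hypotheses of Theorem~\ref{Main} from this normal form. Exactly $\lceil n/2\rceil$ of the invariant factors are not divisible by $2$, so $\rank_2 W(G) = \lceil n/2\rceil$, which is the hypothesis of part~(ii). For any odd prime $p$ dividing $\det W(G)$ we must have $p \mid m$; since $m$ is square-free, $p$ divides only the single invariant factor $d_n = 2m$, and only to the first power, so exactly one invariant factor is divisible by $p$ and hence $\rank_p W(G) = n-1$, which is the hypothesis of part~(i). Thus Theorem~\ref{Main} applies both for $p=2$ and for every odd prime divisor of $\det W(G)$.

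Finally I would pin down the level. Fix $Q \in \mathcal{Q}(G)$ with level $\ell$. Part~(ii) gives $\ell \mid d_n/2 = m$, so $\ell$ is odd and square-free. For each odd prime $p \mid m$, part~(i) gives $\ell \mid d_n/p = 2m/p$; since $\ell$ is odd this forces $\ell \mid m/p$, and as $p \nmid m/p$ we get $p \nmid \ell$. Running over all prime divisors of $m$ and combining with $\ell \mid m$ forces $\ell = 1$, so $Q$ is a permutation matrix; since $Q$ was arbitrary, $\mathcal{Q}(G)$ consists only of permutation matrices, and $G$ is DGS by Lemma~\ref{good}. I do not expect any genuine obstacle here: the entire content lies in Theorem~\ref{Main}, and the corollary is a matter of checking the rank conditions and performing the elementary divisibility bookkeeping that combines the $p=2$ and odd-$p$ constraints. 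The same conclusion can be reached in one line from the preceding Corollary, whose Smith Normal Form specializes here to $l_{n-r}=1$ and $m_1=\cdots=m_s=1$, giving $\ell \mid 2^{0}p_1^{0}\cdots p_s^{0}=1$.
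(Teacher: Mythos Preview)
Your proposal is correct and follows essentially the same route as the paper: both convert the arithmetic hypothesis into the stated Smith Normal Form and then deduce $\ell=1$ from Theorem~\ref{Main} (the paper routes this through Corollary~1, which you also mention as the one-line alternative), concluding via Lemma~\ref{good}. The only difference is that you spell out the rank verifications and the divisibility bookkeeping explicitly, whereas the paper compresses this into the single invocation of Corollary~1.
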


\begin{proof} It follows from \cite{wang2017JCTB} that $\frac{\det W(G)}{2^{\lfloor n/2 \rfloor}}$ is odd and square-free if and only if the SNF of $W(G)$ is as follows:
$${\rm diag}(\underbrace{1,1,\ldots,1}_{\lceil\frac{n}{2}\rceil},\underbrace{2,2,\ldots,2,2p_1p_2\cdots p_s}_{\lfloor\frac{n}{2}\rfloor}),$$
where $p_i$'s are distinct odd primes. Let $Q\in{\mathcal{Q}_G}$ with level $\ell$. By Corollary 1, we have $\ell \mid 2^{0}p_1^{0}p_{2}^0\cdots p_s^{0}$, i.e., $\ell\mid 1$. Thus, $\ell=1$ and $Q$ is a permutation
matrix. By Lemma~\ref{good}, $G$ is DGS.

\end{proof}

\section{Proof of Theorem~\ref{Main}}

In this section, we shall present the proof of Theorem~\ref{Main}. Before doing so, we shall describe our main strategy, as follows.

\subsection{Main ideas}
Let $G$ and $H$ be two generalized cospectral graphs. For simplicity, we write $A=A(G)$ and $B=A(H)$. Let $p$ be a prime and $r=\rank_p W(G)$. Let $Q\in{\mathcal{Q}_G}$ be a regular rational orthogonal matrix such that $Q^{T}AQ=B$ and $Qe=e$, which imply $Q^{\T}A^ke=B^ke$ for $k=0,1,\ldots,n-1$, i.e.,
 \begin{gather}
 \nonumber
Q^{\T}e=e,\\\nonumber
Q^{\T}Ae=Be,\\\nonumber
\vdots\\\label{EM}
Q^{\T}A^{r-1}e=B^{r-1}e,\\ \nonumber
Q^{\T}A^{r}e=B^{r}e,\\\nonumber
\vdots\\\nonumber
Q^{\T}A^{n-1}e=B^{n-1}e.\nonumber
\end{gather}
 Suppose that we can find some integers $a_0,a_1,\ldots,a_{r-1}$ such that
 \begin{eqnarray}\label{key1}
 \left\{
 \begin{split}
 M(G)e=a_0e+a_1Ae+\cdots+a_{r-1}A^{r-1}e+A^{r}e\equiv 0~({\rm mod}~p),\\
 M(H)e=a_0e+a_1Be+\cdots+a_{r-1}B^{r-1}e+B^{r}e\equiv 0~({\rm mod}~p).
 \end{split}
 \right.
 \end{eqnarray}
 Then multiplying both sides of the first to the $r$-th equations by $a_0, a_1,\ldots, a_{r-1}$ in Eq.~\eqref{EM}, respectively, and then adding them to the $(r+1)$-th equation generates that $Q^{\T}M(G)e=M(H)e$. Furthermore, it is easy to see that $$Q^{\T}A^{i}M(G)e=B^{i}M(H)e,$$ for $i=1,2,\ldots,n-r-1$.

 Let $$\hat{W}(G):=[e,Ae,\ldots,A^{r-1}e,\frac{M(G)e}p,\frac{AM(G)e}p,\ldots,\frac{A^{n-r-1}M(G)e}p]$$ and $$\hat{W}(H):=[e,Be,\ldots,B^{r-1}e,\frac{M(H)e}p,\frac{BM(H)e}p,\ldots,\frac{B^{n-r-1}M(H)e}p].$$
 Then both $\hat{W}(G)$ and $\hat{W}(H)$ are integral matrices, and $Q^{\T}\hat{W}(G)=\hat{W}(H)$ still holds.
   Note that $d_n(\hat{W}(G))=\frac{d_n(W(G))}p$ according to Lemma~\ref{lSNF}. It follows from Lemma~\ref{rem1} that $\ell \mid \frac{d_n(W(G))}p$, and we are done!

\begin{rem}
We would like to mention that if Eq.~\eqref{key1} holds, then the null spaces of $W(G)$ and $W(H)$ coincide, or in notations, ${\rm Null}(W(G))={\rm Null} (W(H))$, over $\mathbb{F}_p$. Actually, let $\alpha_0=(a_0,a_1,\ldots,a_{r-1},1,0,\ldots,0)^{\T}$ and $$\alpha_i=(\underbrace{0,\ldots,0}_i,a_0,a_1,\ldots,a_{r-1},1,0,\ldots,0)^{\T},$$ for $i=0,1,\ldots,n-r-1$. Then it follows from the first equation in Eq.~\eqref{key1} that $W(G)\alpha_0=0$, and hence $W(G)\alpha_i=0$ for $i=1,\ldots,n-r-1$. Note that $\alpha_i$'s
are linearly independent and thus form a basis of ${\rm Null}(W(G))$. The same is true for ${\rm Null}(W(H))$.

\end{rem}

So the primary focus of our proof is to show that there indeed exist some integers $a_0,a_1,\ldots,a_{r-1}$ such that Eq.~\eqref{key1} holds. This will be done in two cases: $p$ is an odd prime and $p=2$.

\subsection{The case that $p$ is odd }
In this subsection, we shall present the proof of Theorem~\ref{Main}~(i). For the case $p$ is an odd prime, we have $r={\rm rank}_pW(G)=n-1$. We need the following lemmas.

\begin{lemma}\label{oddroot}
 Let $G$ and $H$ be two generalized cospectral graphs with walk matrices $W(G)$ and $W(H)$, respectively. Let $\phi(x)=x^n+c_{2}x^{n-2}+\cdots+c_{n-1}x+c_n$ be their common characteristic polynomial (note that $c_{1}=0$). Suppose that ${\rm rank}_p(W(G))={\rm rank}_p(W(H))=n-1$ and $\phi(x)\equiv 0~({\rm mod}~p)$ has no two distinct roots.
Then under the condition of Theorem~\ref{Main}~(i), $W(G)x\equiv 0~({\rm mod}~p)$ and $W(H)x\equiv 0~({\rm mod}~p)$ have the same set of solutions.
\end{lemma}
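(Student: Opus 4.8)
The plan is to exploit the fact that $W(G)$ and $W(H)$ have corank $1$ over $\mathbb{F}_p$, so each null space is spanned by a single vector, and to show these two vectors coincide (up to scalar). First I would identify the null vectors explicitly. Since $\mathrm{rank}_p W(G)=n-1$, Lemma~\ref{rank} tells us the first $n-1$ columns $e,Ae,\ldots,A^{n-2}e$ are linearly independent over $\mathbb{F}_p$, so $A^{n-1}e$ is a unique $\mathbb{F}_p$-combination of them; equivalently there is a monic degree-$(n-1)$ polynomial $g_G(x)=a_0+a_1x+\cdots+a_{n-2}x^{n-2}+x^{n-1}\in\mathbb{F}_p[x]$ with $g_G(A)e\equiv 0\ (\mathrm{mod}\ p)$, and the coefficient vector $\alpha_G=(a_0,\ldots,a_{n-2},1)^{\T}$ spans $\mathrm{Null}_p(W(G))$. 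Likewise $\alpha_H=(b_0,\ldots,b_{n-2},1)^{\T}$ with $g_H(B)e\equiv 0$ spans $\mathrm{Null}_p(W(H))$. The goal is to prove $g_G\equiv g_H\ (\mathrm{mod}\ p)$, for then $\alpha_G=\alpha_H$ and the two congruences $W(G)x\equiv 0$ and $W(H)x\equiv 0$ have identical solution sets, and the integers $a_0,\ldots,a_{n-2}$ serve simultaneously as the coefficients in Eq.~\eqref{key1} (the second line of Eq.~\eqref{key1} is exactly $g_G(B)e\equiv 0$, which is what we must establish).

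The key input I would use is the Cayley--Hamilton relation and the hypothesis on $\phi$. Over $\mathbb{F}_p$ we have $\phi(A)\equiv 0$ and $\phi(B)\equiv 0$, hence $\phi(A)e\equiv 0$ and $\phi(B)e\equiv 0$; so the minimal polynomial of $A$ acting on the cyclic vector $e$ divides both $g_G$ and $\phi$ mod $p$, and in fact (by the corank-$1$ and Lemma~\ref{rank} setup) $g_G$ is precisely this minimal "$e$-annihilator," of degree $n-1$, so $g_G \mid \phi$ in $\mathbb{F}_p[x]$ and $\phi \equiv (x-\lambda_G)\,g_G$ for some $\lambda_G\in\mathbb{F}_p$; similarly $\phi\equiv(x-\lambda_H)g_H$. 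Now the hypothesis that $\phi\equiv 0\ (\mathrm{mod}\ p)$ has no two distinct roots comes in: the polynomial $\phi$ has at most one distinct root in $\overline{\mathbb{F}_p}$. I would argue that $\lambda_G$ and $\lambda_H$ are both roots of $\phi$ in $\mathbb{F}_p$ (since $x-\lambda_G$ divides $\phi$), so $\lambda_G=\lambda_H=:\lambda$, and therefore $(x-\lambda)g_G\equiv(x-\lambda)g_H\ (\mathrm{mod}\ p)$, which by cancellation in the integral domain $\mathbb{F}_p[x]$ forces $g_G\equiv g_H\ (\mathrm{mod}\ p)$, completing the proof.

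The main obstacle I anticipate is justifying that $g_G$ really divides $\phi$ over $\mathbb{F}_p$ with quotient a \emph{linear} factor $x-\lambda_G$ — i.e., that the $e$-annihilating polynomial of degree $n-1$ is a genuine divisor of the characteristic polynomial, not merely a polynomial sharing the root structure. This needs a small argument: because $e,Ae,\ldots,A^{n-2}e$ are independent mod $p$ while $e,Ae,\ldots,A^{n-1}e$ are not, the $\mathbb{F}_p$-cyclic subspace generated by $e$ under $A$ has dimension $n-1$, on which $A$ acts with characteristic polynomial $g_G$; extending to an $A$-invariant complement (or using the rational canonical form over $\mathbb{F}_p$) shows $\phi\equiv g_G\cdot h$ with $\deg h=1$, and $h=x-\lambda_G$. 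One subtlety to handle carefully is whether $A$ is diagonalizable or has a Jordan-type block over $\overline{\mathbb{F}_p}$ when $\phi$ has a repeated root; the "no two distinct roots" hypothesis is exactly designed to sidestep case analysis here, since it pins down $\lambda_G=\lambda_H$ regardless of multiplicities. A second, more routine point is checking that $\lambda_G\in\mathbb{F}_p$ and not merely in an extension — but this is automatic since $x-\lambda_G$ is a factor of $\phi$ in $\mathbb{F}_p[x]$, as $g_G,\phi\in\mathbb{F}_p[x]$ and $\mathbb{F}_p[x]$ is a UFD.
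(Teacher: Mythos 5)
Your proposal is correct, and it reaches the conclusion by a genuinely different (more structural) route than the paper. The paper works entirely at the level of coefficients: it multiplies the dependence $A^{n-1}e\equiv -a_{n-2}A^{n-2}e-\cdots-a_0e$ by $A$, substitutes, and compares with the Cayley--Hamilton relation $A^ne\equiv -c_2A^{n-2}e-\cdots-c_ne$ using the independence of $e,Ae,\ldots,A^{n-2}e$ (Lemma~\ref{rank}); this yields an explicit recursion showing each $a_i$ is determined by $a_{n-2}$ and that $\phi(a_{n-2})\equiv 0\pmod p$, and the ``no two distinct roots'' hypothesis then forces $a_{n-2}=b_{n-2}$, hence $a_i=b_i$ for all $i$. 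Your argument packages the same underlying identity $\phi\equiv(x-\lambda_G)g_G\pmod p$ as a divisibility statement and finishes by cancellation in $\mathbb{F}_p[x]$, which is cleaner and makes the role of the hypothesis transparent (it pins down the single linear cofactor); in fact the paper's own Remark after the lemma, via the companion matrix, is essentially your factorization in disguise. Two small points: the divisibility $g_G\mid\phi$ over $\mathbb{F}_p$ needs no discussion of cyclic subspaces, invariant complements, or rational canonical form at all --- since $g_G$ is the \emph{minimal} monic annihilator of $e$ (degree exactly $n-1$ by Lemma~\ref{rank} and $\rank_pW(G)=n-1$) and $\phi(A)e\equiv 0\pmod p$ by Cayley--Hamilton, minimality gives $g_G\mid\phi$ directly, and the quotient is monic linear over $\mathbb{F}_p$ by degree count; and your phrase ``extending to an $A$-invariant complement'' should be dropped, since such a complement need not exist when $A$ is not semisimple mod $p$ (the standard block upper-triangular argument with an arbitrary complement, or the ideal-theoretic argument just described, is what actually works). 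With that cosmetic repair your proof is complete and arguably shorter than the paper's.
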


\begin{proof} By Cayley-Hamilton's Theorem, we have
\begin{equation}\label{ch}
  A^n+c_{2}A^{n-2}+\cdots+c_{n-1}A+c_nI=0.
\end{equation}
Right-multiplying by $e$ on both right sides of Eq.~\eqref{ch} and then reducing modulo $p$ gives that

\begin{equation}\label{CH1}
A^ne\equiv-c_{2}A^{n-2}e-\cdots-c_{n-1}Ae-c_ne~({\rm mod}~p).
\end{equation}
Suppose that $W(G)x\equiv 0~({\rm mod}~p)$ has a solution $(a_0,a_1,\ldots,a_{n-2},1)^T$, i.e.,

\begin{equation}\label{Eq1}
A^{n-1}e\equiv-a_{n-2}A^{n-2}e-\cdots-a_1Ae-a_0e~({\rm mod}~p).
\end{equation}
Left-multiplying both sides of Eq.~\eqref{Eq1} by $A$ generates that
\begin{equation}\label{Eq2}
A^{n}e\equiv-a_{n-2}A^{n-1}e-\cdots-a_1A^2e-a_0Ae~({\rm mod}~p).
\end{equation}

Plugging Eq.~\eqref{Eq1} into Eq.~\eqref{Eq2} gives that

\begin{eqnarray}\label{Eq3}
\begin{split}
A^{n}e&\equiv&(-a_{n-3}+a_{n-2}^2)A^{n-2}e+(-a_{n-4}+a_{n-3}a_{n-2})A^{n-3}e+\cdots+\\
&&(-a_1+a_2a_{n-2})A^{2}e+(-a_0+a_1a_{n-2})Ae+a_0a_{n-2}e~({\rm mod}~p).
\end{split}
\end{eqnarray}

It follows from Lemma~\ref{rank} that $e,Ae,\ldots,A^{n-2}e$ are linearly independent over $\mathbb{F}_p$. Comparing Eq.~\eqref{CH1} and Eq.~\eqref{Eq3}, we get that
 \begin{eqnarray}\label{EQQ}
 a_0a_{n-2}&=&-c_n,\nonumber\\
 -a_0+a_1a_{n-2}&=&-c_{n-1},\nonumber\\
 -a_1+a_2a_{n-2}&=&-c_{n-2},\nonumber \\
 \vdots\\
 -a_{n-4}+a_{n-3}a_{n-2}&=&-c_{3},\nonumber\\
 -a_{n-3}+a_{n-2}^2&=&-c_{2}.\nonumber
 \end{eqnarray}

Iterating the above equations from the bottom up, it is easy to verify that $\phi(a_{n-2})\equiv0~({\rm mod}~p)$. Similarly, suppose $W(H)x\equiv~0~({\rm mod}~p)$ has a solution $(b_0,b_1,\ldots,b_{n-2},1)^T$,
 we must have $\phi(b_{n-2})\equiv0~({\rm mod}~p)$. Note that every $a_i$ (resp. $b_i$) is uniquely determined by $a_{n-2}$ (resp. $b_{n-2}$) for $i=0,1,\ldots,n-3$.
 By the assumption that $\phi(x)\equiv 0~({\rm mod}~p)$ has no two distinct roots, we get $a_{n-2}=b_{n-2}$, and hence $a_i=b_i$ for $i=0,1,\ldots,n-3$. The proof is complete.

\end{proof}

\begin{rem}
We give a straightforward explanation why $\phi(a_{n-2})=0$ holds, over $\mathbb{F}_p$. Let $C$ be the companion matrix of $\phi(x)$.
Let $\eta=(a_0,a_1,a_2,\ldots,a_{n-2},1)^T$. Then it is easy to verify that Eq.~\eqref{EQQ} is equivalent to the following equation:
\begin{eqnarray*}
\left(\begin{array}{cccccc}
 a_{n-2}&0&0&\cdots&0&c_n\\
 -1&a_{n-2}&0&\cdots&0&c_{n-1}\\
 0&-1&a_{n-2}&\cdots&0&c_{n-2}\\
 \vdots&\vdots&\vdots&\ddots&0&\vdots\\
 0&0&0&\cdots&a_{n-2}&c_{2}\\
 0&0&0&\cdots&-1&a_{n-2}
\end{array}
\right)
\left(\begin{array}{c}
 a_0\\
 a_1\\
 a_2\\
 \vdots\\
 a_{n-2}\\
 1
\end{array}
\right)=O,
\end{eqnarray*}
i.e., $(a_{n-2}I-C)\eta=O$. Therefore, $a_{n-2}$ is an eigenvalue of $C$, i.e., $\phi(a_{n-2})=0$.
\end{rem}

Unfortunately, the assumption in Lemma~\ref{oddroot} that $\phi(x)\equiv 0~({\rm mod}~p)$ has no two distinct roots does not always hold. We shall get rid of this difficulty
by considering the family of matrices $A+tJ$ and $B+tJ$ ($J$ is the all-one matrix) for all integers $t$ instead of $A$ and $B$.

Now, let $$W_t(G):=[e,A_te,\ldots,A_t^{n-2}e,A_t^{n-1}e]~{\rm and}~ W_t(H):=[e,B_te,\ldots,B_t^{n-2}e,B_t^{n-1}e],$$ where $A_t=A+t J$, $B_t=B+t J$, and $t\in \mathbb{Z}$. Define $\phi(x,t)=\det(x I-A_t(G))$. By Lemma~\ref{oddroot}, we can easily get the following corollary.

\begin{Cor} Suppose that $\phi(x,t_0)\equiv 0~({\rm mod}~p)$ has no two distinct roots for some integer $t_0$.
Suppose that ${\rm rank}_p(W(G))={\rm rank}_p(W(H))=n-1$. Then $W_{t_0}(G)x\equiv 0~({\rm mod}~p)$ and $W_{t_0}(H)x\equiv 0~({\rm mod}~p)$ have the same set of solutions.
 \end{Cor}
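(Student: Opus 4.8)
The plan is to derive the corollary from Lemma~\ref{oddroot} by the substitution $A\mapsto A_{t_0}=A+t_0J$ and $B\mapsto B_{t_0}=B+t_0J$. Recall that $G$ and $H$ are generalized cospectral, so by Lemma~\ref{lortho} there is a regular rational orthogonal $Q$ with $Q^{\T}AQ=B$ and $Qe=e$; since $Q$ is orthogonal, $Q^{\T}e=e$ as well, and because $J=ee^{\T}$ we get $Q^{\T}JQ=(Q^{\T}e)(Q^{\T}e)^{\T}=ee^{\T}=J$. Hence $Q^{\T}A_{t_0}Q=B_{t_0}$. In particular $A_{t_0}$ and $B_{t_0}$ have the same characteristic polynomial $\phi(x,t_0)$, and $Q^{\T}A_{t_0}^{k}e=B_{t_0}^{k}e$ for all $k$ --- precisely the structural facts used in the proof of Lemma~\ref{oddroot}.

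Next I would transfer the rank hypothesis. A quick induction on $k$ shows that $A_{t_0}^{k}e$ is an integer combination of $e,Ae,\ldots,A^{k}e$ in which $A^{k}e$ occurs with coefficient $1$, because $JA^{i}e=(e^{\T}A^{i}e)\,e$ is an integer multiple of $e$. Thus $W_{t_0}(G)=W(G)R$ for an upper unitriangular integer matrix $R$, so $\rank_p W_{t_0}(G)=\rank_p W(G)=n-1$, and likewise $\rank_p W_{t_0}(H)=n-1$. Moreover $e,A_{t_0}e,\ldots,A_{t_0}^{n-2}e$ span the same $\mathbb{F}_p$-subspace as $e,Ae,\ldots,A^{n-2}e$, which is $(n-1)$-dimensional by Lemma~\ref{rank}; hence the first $n-1$ columns of $W_{t_0}(G)$ (resp.\ $W_{t_0}(H)$) are linearly independent over $\mathbb{F}_p$, which is what the argument of Lemma~\ref{oddroot} actually needs.

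With these in hand I would re-run the proof of Lemma~\ref{oddroot} with $A_{t_0},B_{t_0},\phi(x,t_0)$ in place of $A,B,\phi(x)$: apply Cayley--Hamilton to $A_{t_0}$, express $A_{t_0}^{n}e$ in two ways in the basis $\{A_{t_0}^{i}e\}_{i=0}^{n-2}$, and compare coefficients. This shows that any solution $(a_0,\ldots,a_{n-2},1)$ of $W_{t_0}(G)x\equiv 0\ (\mathrm{mod}\ p)$ has $a_0,\ldots,a_{n-3}$ determined recursively by $a_{n-2}$, and forces $a_{n-2}$ (suitably shifted) to be a root of $\phi(x,t_0)$ mod $p$; the same for $H$. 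The hypothesis that $\phi(x,t_0)\equiv 0\ (\mathrm{mod}\ p)$ has no two distinct roots then pins $a_{n-2}$ down uniquely, hence the whole null vector, hence $W_{t_0}(G)x\equiv 0$ and $W_{t_0}(H)x\equiv 0$ have the same solution set.

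The one point that is not a verbatim citation --- and the only real obstacle --- is that $A_{t_0}$ is not a $(0,1)$-matrix and, unlike $A$, has trace $nt_0$, so $\phi(x,t_0)$ generally has a nonzero $x^{n-1}$-coefficient $c$, whereas Lemma~\ref{oddroot} was stated with $c_1=0$. I expect this to be mild: the coefficient comparison still closes, now yielding that the monic polynomial $x^{n-1}+a_{n-2}x^{n-2}+\cdots+a_0$ divides $\phi(x,t_0)$ modulo $p$ with complementary factor $x-(a_{n-2}-c)$, so that $a_{n-2}-c$ is a root of $\phi(x,t_0)$ and the uniqueness argument proceeds exactly as in Lemma~\ref{oddroot}. (One may alternatively absorb this by observing that the Remark's companion-matrix reformulation is insensitive to whether the trace vanishes.)
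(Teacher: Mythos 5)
Your proposal is correct and follows essentially the same route as the paper, whose proof of this corollary is simply ``replace $A$ and $B$ by $A_{t_0}$ and $B_{t_0}$ in the proof of Lemma~\ref{oddroot}.'' You in fact supply two details the paper's terse proof glosses over --- the transfer of the rank hypothesis via $W_{t_0}(G)=W(G)R$ with $R$ upper unitriangular, and the nonvanishing $x^{n-1}$-coefficient of $\phi(x,t_0)$, which you correctly absorb by noting that the monic degree-$(n-1)$ null polynomial divides $\phi(x,t_0)$ over $\mathbb{F}_p$ with linear cofactor $x-(a_{n-2}-c)$, so the no-two-distinct-roots hypothesis still pins down $a_{n-2}$ --- and both are handled correctly.
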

 \begin{proof}It follows from $Q^{\T}AQ=B$ and $Qe=e$ that $Q^{\T}A_tQ=B_t$. Then replace respectively $A$ and $B$ with $A_t$ and $B_t$ in the proof of Lemma~\ref{oddroot},
 it is easy to see the corollary holds.

 \end{proof}

 \begin{lemma}\label{lemodd}
 Under the conditions of Theorem~\ref{Main}, suppose that $p\mid\ell$, then there exists an integer $t_0$ such that $\phi(x,t_0)\equiv 0~({\rm mod}~p)$ has no two distinct roots.
 \end{lemma}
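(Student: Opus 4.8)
The plan is to pass to an explicit one–parameter family of univariate polynomials over $\mathbb{F}_p$ and then reduce the statement to a linear-algebraic fact about $W(G)$. Since $J=ee^{\T}$ has rank one, the matrix determinant lemma gives, over $\mathbb{F}_p$,
$$\phi(x,t)=\det\!\big(xI-A-t\,ee^{\T}\big)\equiv\phi(x)-t\,\psi(x)\pmod p,\qquad \psi(x):=e^{\T}\operatorname{adj}(xI-A)e .$$
For a fixed $\rho\in\mathbb{F}_p$, one checks that $\rho$ is a root of $\phi(x,t)\bmod p$ for \emph{every} $t$ precisely when $\phi(\rho)\equiv\psi(\rho)\equiv0$, and for \emph{at most one} value of $t$ otherwise. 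Hence, writing $R$ for the set of common roots of $\phi$ and $\psi$ in $\mathbb{F}_p$, it suffices to show that $R$ is a single point $\{\mu\}$ with $\psi(\mu)\equiv 0$: then $\{\phi(\rho)/\psi(\rho):\rho\in\mathbb{F}_p,\ \psi(\rho)\not\equiv0\}$ has at most $p-1$ elements, and for any $t_0$ outside this set the only root of $\phi(x,t_0)$ in $\mathbb{F}_p$ is $\mu$, which is the $t_0$ we want. So everything reduces to identifying $R$.

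The key point — this is where $\rank_p W(G)=n-1$ enters — is that $\operatorname{Null}_{\mathbb{F}_p}(W(G)^{\T})$ equals the orthogonal complement of the Krylov space $\mathscr{K}:=\langle e,Ae,\dots,A^{n-1}e\rangle_{\mathbb{F}_p}$. Since $A$ is symmetric, $\mathscr{K}$ is $A$-invariant and hence so is $\mathscr{K}^{\perp}$; as $\dim\mathscr{K}^{\perp}=n-\rank_pW(G)=1$, the space $\mathscr{K}^{\perp}$ is spanned by an eigenvector $u$ of $A$ over $\mathbb{F}_p$, say $Au=\mu u$, and $u\perp\mathscr{K}\ni e$ forces $e^{\T}u\equiv0$. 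Because $Ju=(e^{\T}u)e\equiv0$, the vector $u$ is an eigenvector of $A_t$ with eigenvalue $\mu$ for \emph{every} $t$; thus $\mu$ is a root of $\phi(x,t)$ for all $t$, and comparing two values of $t$ gives $\phi(\mu)\equiv\psi(\mu)\equiv0$, i.e. $\mu\in R$.

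Now suppose $\rho\in R$ with $\rho\neq\mu$; we derive a contradiction. If $\rho$ has geometric multiplicity $1$ modulo $p$, then $\operatorname{adj}(\rho I-A)=c\,vv^{\T}$ with $c\not\equiv0$ and $v$ spanning $\ker(\rho I-A)$, so $\psi(\rho)=c\,(e^{\T}v)^2\equiv0$ forces $e^{\T}v\equiv0$; but then (using symmetry and $Av=\rho v$) $v^{\T}A^{k}e=\rho^{k}(v^{\T}e)\equiv0$ for all $k$, so $v\in\mathscr{K}^{\perp}=\langle u\rangle$ and hence $\rho=\mu$, a contradiction. Therefore $\rho$ has geometric multiplicity at least $2$. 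Since $\mathscr{K}$ is a cyclic $\mathbb{F}_p[A]$-module, its $\rho$-eigenspace is at most one-dimensional, so there is an eigenvector $v\in\ker(\rho I-A)\setminus\mathscr{K}$; as $\mathscr{K}$ has codimension $1$ we get $\mathbb{F}_p^{n}=\mathscr{K}\oplus\langle v\rangle$ with both summands $A$-invariant. Finally $u\perp\mathscr{K}$, while $u\perp v$ because $u,v$ are eigenvectors of the symmetric matrix $A$ for the distinct eigenvalues $\mu,\rho$; hence $u\perp\mathbb{F}_p^{n}$, i.e. $u=0$, contradicting that $\mathscr{K}^{\perp}$ is one-dimensional. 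So $R=\{\mu\}$, and by the reduction of the first paragraph the lemma follows.

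The main obstacle is the third paragraph: converting ``corank one modulo $p$'' into the statement that the single deficient eigenvalue of $A$ modulo $p$ (the eigenvalue of $A$ acting on the one-dimensional quotient $\mathbb{F}_p^{n}/\mathscr{K}$) cannot also have geometric multiplicity $\ge2$, which is exactly what the eigenvector/Krylov–orthogonality bookkeeping achieves. Minor care is needed for degenerate sub-cases (e.g. $\psi\equiv0\pmod p$, or very small $n$), which are absorbed into the same argument. It is worth remarking that the hypothesis $p\mid\ell$ is not actually consumed above; it is needed only in the subsequent application of this lemma, where — together with the equality of the Smith normal forms of $W(G)$ and $W(H)$ for generalized cospectral graphs — it secures $\rank_pW(H)=n-1$, so that Lemma~\ref{oddroot} and its corollary may be invoked.
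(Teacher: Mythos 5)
Your proof is correct, and it takes a genuinely different route from the paper's. The paper writes $\phi(x,t)=(1+t)\phi(G,x)-(-1)^n t\,\phi(\bar G,-1-x)$, factors it as $\varphi(x)\bigl(\phi_1(x)+t\phi_2(x)\bigr)$ with $\gcd(\phi_1,\phi_2)=1$, and produces the distinguished root $\lambda_0$ of $\varphi$ from a column $z$ of $\ell Q$ reduced modulo $p$ --- this is precisely where $p\mid\ell$ is used, since it gives $e^{\T}z\equiv 0\pmod p$ --- and then finishes by a pigeonhole argument over $t$ together with Lemma~\ref{lemodd1} (proved via the eigenvectors $\beta_0,\beta_1$ of $A$ and $A+J$). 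You instead expand $\phi(x,t)\equiv\phi(x)-t\psi(x)\pmod p$ by the matrix determinant lemma, obtain the distinguished eigenvalue $\mu$ directly from the one-dimensional, $A$-invariant left null space $\mathscr{K}^{\perp}$ of $W(G)$ over $\mathbb{F}_p$ (your $\mu$ is the paper's $\lambda_0$, and your $u$ is, up to a scalar, the paper's $z$), and show the common-root set of $\phi$ and $\psi$ is exactly $\{\mu\}$: the adjugate computation disposes of eigenvalues of geometric multiplicity one, the cyclic-module plus orthogonality argument excludes multiplicity at least two, and the avoidance count $\lvert\{\phi(\rho)/\psi(\rho):\psi(\rho)\not\equiv 0\}\rvert\le p-1$ then yields $t_0$. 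I checked the individual steps (adjugate of a symmetric corank-one matrix being $c\,vv^{\T}$ with $c\not\equiv0$; non-degeneracy of the standard bilinear form over $\mathbb{F}_p$; the $\rho$-eigenspace of a cyclic module being at most one-dimensional) and they all hold over $\mathbb{F}_p$. What your version buys: it never invokes $Q$, $H$, or $p\mid\ell$, so it proves the lemma as a statement about $G$ alone (a mild strengthening) and absorbs Lemma~\ref{lemodd1} into the argument; the paper's route needs less linear algebra but leans on the cospectral mate and on $p\mid\ell$.

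One caveat concerns your closing aside only: it is not true (nor used in the paper) that generalized cospectral graphs have equal Smith normal forms of their walk matrices; indeed the paper explicitly allows $\rank_p W(H)<n-1$ and handles that case by the divisibility argument through Lemma~\ref{rem1}, while $p\mid\ell$ serves in the theorem's proof only to dismiss the trivial case $p\nmid\ell$ and, in the paper's own proof of this lemma, to manufacture the vector $z$. This misstatement is outside the proof of the lemma and does not affect its validity.
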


 \begin{proof} It is easy to see that $\phi(x,t)=(1+t)\phi(G,x)-(-1)^nt\phi(\bar{G},-1-x)$. Thus we may write
 $\phi(x,t)=\varphi(x)(\phi_1(x)+t\phi_2(x))$, where $\gcd(\phi_1(x),\phi_2(x))=1.$

 Now we show that $\varphi(x)$ always has a factor $x-\lambda_0$ for some $\lambda_0\in{\mathbb{F}_p}$.
 Actually, it follows from  $Q^{\T}AQ=B$ and $Qe=e$ that $Q^{\T}W(G)=W(H)$, and hence $W^{\T}(G)(\ell Q)=\ell W^{\T}(H)\equiv~0~({\rm mod}~p).$
 Note that ${\rm rank}_pW(G)=n-1$. It follows that ${\rm rank}_p \ell Q=1$. Let $z\not\equiv~0~({\rm mod}~p) $ be any column of $\ell Q$. Then it follows from
 $AQ=QB$ that $Az\equiv~\lambda_0z~({\rm mod}~p) $ for some integer $\lambda_0$. Further note that $e^{\T}z\equiv~0~({\rm mod}~p)$, we have $A_tz=(A+tJ)z\equiv~\lambda_0z~({\rm mod}~p)$ for any $t\in\mathbb{Z}$. Therefore, we have $(x-\lambda_0)\mid \phi(x,t)$, over $\mathbb{F}_p$.
 Thus, $(x-\lambda_0)\mid \varphi(x)$, since $\gcd(\phi_1(x),\phi_2(x))=1$.

 Now we claim, suppose that $\phi_1(x)+t\phi_2(x)\equiv~0~({\rm mod}~p)$ has a root $x_t\in {\mathbb{F}_p}$ for every $t\in{\{0,1,\ldots,p-1\}}$,
 we must have $x_i\neq x_j$ for $i\neq j$. For otherwise, suppose that $\phi_1(x)+t_1\phi_2(x)\equiv~0~({\rm mod}~p)$ and $\phi_1(x)+t_2\phi_2(x)\equiv~0~({\rm mod}~p)$
 have a common root $\hat{x}$, for $t_1\neq t_2$. It is easy to see $(x-\hat{x})\mid \phi_1(x)$ and $(x-\hat{x})\mid \phi_2(x)$; a contradiction.

 Therefore, there must exist a $t_0\in{\mathbb{F}_p}$ such that $\phi_1(x)+t_0\phi_2(x)\equiv~0~({\rm mod}~p)$ has a root $\lambda_0$ by the Pigeonhole Principle.
 Note that $\phi_1(x)+t_0\phi_2(x)\equiv~0~({\rm mod}~p)$ has no other root than $\lambda_0$ (otherwise it contradicts the above claim).

 It remains to show that $\varphi(x)$ has no root other than $\lambda_0$. This will be proved in the next lemma.
 Combining the above facts, whenever there is a $t\in{\mathbb{F}_p}$, for which $\phi_1(x)+t\phi_2(x)\equiv~0~({\rm mod}~p)$ has no root, we are done!
 When $\phi_1(x)+t\phi_2(x)\equiv~0~({\rm mod}~p)$ has a root for every $t\in{\mathbb{F}_p}$, still the lemma is true. This completes the proof.

 \end{proof}

 \begin{lemma}\label{lemodd1} Under the condition of Lemma~\ref{lemodd},
 $\varphi(x)$ has no roots other than $\lambda_0$.
 \end{lemma}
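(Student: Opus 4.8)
The plan is to prove the statement directly: for any root $\mu$ of $\varphi(x)$ (over $\mathbb{F}_p$, or even over $\overline{\mathbb{F}_p}$) we show $\mu=\lambda_0$. Recall from the proof of Lemma~\ref{lemodd} that $\phi(x,t)=\varphi(x)\bigl(\phi_1(x)+t\phi_2(x)\bigr)$ over $\mathbb{F}_p$, so $\varphi(\mu)=0$ gives $(x-\mu)\mid\phi(x,t)$ for every integer $t$, i.e. $\det(\mu I-A-tJ)\equiv 0\pmod p$ for all $t$. Since $J=ee^{\T}$ has rank one, the map $t\mapsto\det(\mu I-A-t\,ee^{\T})$ is affine in $t$, namely $\det(\mu I-A)-t\,e^{\T}\operatorname{adj}(\mu I-A)\,e$; as $p\ge 3$, a vanishing affine function on $\mathbb{F}_p$ is identically zero, so the first step is to conclude
\[
\det(\mu I-A)\equiv 0\pmod p\qquad\text{and}\qquad e^{\T}\operatorname{adj}(\mu I-A)\,e\equiv 0\pmod p .
\]

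The second step extracts from these two congruences a nonzero vector $v$ over $\mathbb{F}_p$ with $Av\equiv\mu v$ and $e^{\T}v\equiv 0\pmod p$. Working over $\mathbb{F}_p$: if $\rank_p(\mu I-A)\le n-2$, then $\ker(\mu I-A)$ has dimension at least $2$ and hence meets the hyperplane $\{x:e^{\T}x=0\}$ nontrivially, giving such a $v$ at once. If $\rank_p(\mu I-A)=n-1$, then $\operatorname{adj}(\mu I-A)$ is a nonzero matrix of rank one; being symmetric it equals $c\,vv^{\T}$ for some $0\neq v\in\mathbb{F}_p^{\,n}$ and $c\in\mathbb{F}_p^{*}$, with $\ker(\mu I-A)=\langle v\rangle$, so the second congruence reads $c(e^{\T}v)^2\equiv 0$ and forces $e^{\T}v\equiv 0$. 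In either case $v^{\T}A^k e=(A^k v)^{\T}e\equiv\mu^k(v^{\T}e)\equiv 0\pmod p$ for all $k$, i.e. $v\in{\rm Null}_p W^{\T}(G)$.

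Finally, since $\rank_p W(G)=n-1$, the space ${\rm Null}_p W^{\T}(G)$ is one-dimensional, and by the construction in the proof of Lemma~\ref{lemodd} it is spanned by the vector $z$ (a nonzero column of $\ell Q$ reduced modulo $p$), which satisfies $Az\equiv\lambda_0 z\pmod p$. Hence $v$ is a nonzero scalar multiple of $z$, and comparing $Av\equiv\mu v$ with $Av\equiv\lambda_0 v$ yields $\mu=\lambda_0$; running the same argument over $\mathbb{F}_{p^r}$ shows $\lambda_0$ is in fact the unique root of $\varphi$ in $\overline{\mathbb{F}_p}$, though only the $\mathbb{F}_p$ statement is needed later. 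I expect the one delicate point to be the rank-one adjugate step: over $\mathbb{F}_p$ one must invoke that a symmetric matrix of rank one genuinely has the form $c\,vv^{\T}$ (symmetry equates its row and column spaces), and that $c\neq 0$ because the adjugate of a corank-one matrix is nonzero; once this is in place, the remaining bookkeeping with the walk matrix is routine.
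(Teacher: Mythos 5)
Your argument is correct, but it is genuinely different from the paper's. The paper proceeds by contradiction: assuming $\varphi(\lambda_1)\equiv 0\pmod p$ with $\lambda_1\neq\lambda_0$, it takes for each $t$ a mod-$p$ eigenvector $\beta_t$ of $A+tJ$ with eigenvalue $\lambda_1$, shows $e^{\T}\beta_t\not\equiv 0$ (else $\beta_t$ and $z$ would be two independent null vectors of $W_t^{\T}$, contradicting $\rank_p W_t=n-1$), and then uses only $t=0,1$ together with symmetry of $A$ to get $0=\beta_0^{\T}(\lambda_1 I-A)\beta_1=(e^{\T}\beta_0)(e^{\T}\beta_1)$, a contradiction. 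You instead argue directly: from $\varphi(\mu)\equiv 0$ and the factorization $\phi(x,t)=\varphi(x)(\phi_1(x)+t\phi_2(x))$ you get $\det(\mu I-A-tJ)\equiv 0$ for all $t$, and the matrix determinant lemma (valid over any commutative ring) splits this into $\det(\mu I-A)\equiv 0$ and $e^{\T}\operatorname{adj}(\mu I-A)e\equiv 0$; the rank-one symmetric adjugate in the corank-one case (and a dimension count in the corank $\geq 2$ case) then produces an eigenvector $v$ of $A$ for $\mu$ with $e^{\T}v\equiv 0$, hence $v\in{\rm Null}_p(W^{\T}(G))=\langle z\rangle$, and comparing with $Az\equiv\lambda_0 z$ gives $\mu=\lambda_0$. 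All the delicate points check out over $\mathbb{F}_p$: a nonzero symmetric rank-one matrix over a field is indeed $c\,vv^{\T}$ with $c\neq 0$, and the adjugate of a corank-one matrix is nonzero with its columns in the kernel; your rank-preservation and hyperplane-intersection steps are also fine. Comparing the two routes: the paper's is shorter and avoids adjugates, exploiting the perturbed eigenvectors and symmetry; yours is more constructive, makes explicit that the offending eigenvector must coincide with $z$ (tying the lemma more visibly to the null-space philosophy of the paper), and, as you note, runs unchanged over $\mathbb{F}_{p^r}$, yielding the slightly stronger conclusion that $\lambda_0$ is the unique root of $\varphi$ in $\overline{\mathbb{F}_p}$, which the paper neither states nor needs.
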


 \begin{proof}Suppose $\varphi(\lambda_1)\equiv~0~({\rm mod}~p)$ with $\lambda_1\neq \lambda_0$. Then for every $t=0,1,\ldots,p-1$, there exists a $\beta_t$ such that $(A+tJ)\beta_t=\lambda_1\beta_t$. It follows that $(\lambda_1I-A)\beta_t=t(e^{\T}\beta_t)e$. Clearly $e^{\T}\beta_t\neq 0$, for otherwise, we have $W_t^{\T}(G)\beta_t=0$.
 As $e^{\T}z\equiv~0~({\rm mod}~p)$, we get that $\beta_t$ and $z$ are linearly independent. Thus we get a contradiction since ${\rm rank}_p(W_t)=n-1$.

 For $t=0$, we have $(\lambda_1I-A)\beta_0=0$. For $t=1$, we have
  \begin{equation}\label{EE}
 (\lambda_1I-A)\beta_1=(e^{\T}\beta_1)e.
 \end{equation}
 Left-multiplying both sides of Eq.~\eqref{EE} by $\beta_0^{\T}$ we get
 $$0=\beta_0^{\T}(\lambda_1I-A)\beta_1=(e^{\T}\beta_1)(e^{\T}\beta_0),$$
 which contradicts the fact that $e^{\T}\beta_0\neq 0$ and $e^{\T}\beta_1\neq0.$

 This completes the proof.

 \end{proof}

 Now, we are ready to present the proof of Theorem~\ref{Main}~(i).

 \begin{proof}[Proof of Theorem~\ref{Main}~(i)]
 If $p\nmid \ell$, then it is obviously that $\ell\mid d_n(W(G))/p$ according to Lemma~\ref{rem1}. So we assume $p\mid \ell$ henceforth.

 First suppose that ${\rm rank}_pW(H)<n-1$. Let $d_n(W(G))=p^{t}b$ and $p\nmid b$, $d_n(W(H))=p^{t'}b'$ and $p\nmid b'$.
 Then $t'<t$. By Lemma~\ref{rem1}, we have $\ell\mid\gcd(p^{t}b,p^{t'}b')$. Note that $\gcd(p^{t}b,p^{t'}b')\mid p^{t'}b$. Thus, we have $\ell \mid p^{t-1}b=d_n(W(G))/p$.

  Next, it needs only to consider the case that ${\rm rank}_pW(H)=n-1$, and hence ${\rm rank}_pW(G)={\rm rank}_pW(H)=n-1$.

 It follows from $Q^{\T}AQ=B$ and $Q^{\T}e=e$ that $Q^{\T}A_tQ=B_t$. Thus, we have $Q^{\T}A_t^ke=B_t^ke$ for $k=0,1,\ldots,n-1$.
By Lemma~\ref{lemodd}, there exists a $t_0\in{\mathbb{F}_p}$ such that $\phi(x,t_0)$ has no two distinct roots over $\mathbb{F}_p$.
 Thus, there exists an integral vector $\eta:=(a_0,a_1,\ldots,a_{n-2},1)^T$ such that $$W_{t_0}(G)\eta\equiv 0~({\rm mod}~p)~{\rm and}~ W_{t_0}(H)\eta \equiv 0~({\rm mod}~p).$$ Let $$M_{t_0}(G)e:=a_0e+a_1A_{t_0}e+\cdots+a_{n-2}A_{t_0}^{n-2}e+A_{t_0}^{n-1}e$$ and $$M_{t_0}(H)e:=a_0e+a_1B_{t_0}e+\cdots+a_{n-2}B_{t_0}^{n-2}e+B_{t_0}^{n-1}e.$$
 Then we have $Q^{\T} M_{t_0}(G)e=M_{t_0}(H)e$.
 Define $$\hat{W}_{t_0}(G):=[e,A_{t_0}e,\ldots,A_{t_0}^{n-2}e,M_{t_0}(G)e/p]$$
 and $$\hat{W}_{t_0}(H):=[e,B_{t_0}e,\ldots,B_{t_0}^{n-2}e,M_{t_0}(H)e/p].$$
 Then $Q^{\T}\hat{W}_{t_0}(G)=\hat{W}_{t_0}(H)$. Note that $d_{n}(\hat{W}_{t_0}(G))=\frac{d_n(W(G))}p$ according to Lemma~\ref{lSNF}. We have $\ell\mid \frac{d_n(W(G))}p$ according to Lemma~\ref{rem1}.
 This completes the proof of Theorem~\ref{Main}~(i).

 \end{proof}

 \subsection{The case $p=2$}
In this subsection, we present the proof of Theorem~\ref{Main}~(ii). For the case $p=2$, we have $r={\rm rank}_2W(G)=\lceil\frac{n}2\rceil$. We shall show that Eq.~\eqref{key1} holds and the solution can be given explicitly using the coefficients of the characteristic polynomial of the graph $G$, as we shall see later.

We need the following lemmas.

\begin{lemma}[Wang~\cite{wang2017JCTB}]\label{leM}
Let $S$ be an integral symmetric matrix. If $S^2\equiv0\pmod2$, then $Se\equiv0\pmod2$.
\end{lemma}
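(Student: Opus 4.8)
The plan is to prove Lemma~\ref{leM} by a direct computation with the diagonal entries of $S^2$, working entirely modulo $2$. The key observation is that over $\mathbb{F}_2$ the diagonal of $S^2$ has a clean description: since $S$ is symmetric, the $(i,i)$-entry of $S^2$ equals $\sum_{k} S_{ik}S_{ki} = \sum_k S_{ik}^2$, and modulo $2$ we have $S_{ik}^2 \equiv S_{ik}$. Hence the $i$-th diagonal entry of $S^2$ is congruent to $\sum_k S_{ik}$, which is exactly the $i$-th entry of $Se$. So if $S^2 \equiv 0 \pmod 2$, then in particular every diagonal entry of $S^2$ is even, and therefore $(Se)_i \equiv 0 \pmod 2$ for every $i$, i.e.\ $Se \equiv 0 \pmod 2$.

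Concretely, I would first record the identity $(S^2)_{ii} = \sum_{k=1}^{n} S_{ik}S_{ki}$. Then I would use symmetry $S_{ki}=S_{ik}$ to rewrite this as $\sum_{k=1}^n S_{ik}^2$. Next I would invoke the elementary fact that for any integer $a$, $a^2 \equiv a \pmod 2$, to conclude $(S^2)_{ii} \equiv \sum_{k=1}^n S_{ik} = (Se)_i \pmod 2$. Finally, the hypothesis $S^2 \equiv 0 \pmod 2$ forces $(S^2)_{ii}\equiv 0$ for all $i$, whence $(Se)_i \equiv 0 \pmod 2$ for all $i$, which is the claim.

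There is essentially no obstacle here; the lemma is a short parity argument and the only ``trick'' is to look at the diagonal of $S^2$ rather than the whole matrix and to use $a^2\equiv a\pmod 2$. The one point worth stating carefully is that we only need the diagonal entries of $S^2$ to vanish mod $2$ (not the full matrix), so the conclusion is somewhat stronger than the hypothesis literally demands, but stating it as in the lemma is cleanest for later use. I would keep the proof to three or four lines.

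\begin{proof}
Since $S$ is symmetric, for each $i$ the $(i,i)$-entry of $S^2$ is
\[
(S^2)_{ii}=\sum_{k=1}^{n}S_{ik}S_{ki}=\sum_{k=1}^{n}S_{ik}^2 .
\]
For any integer $a$ we have $a^2\equiv a\pmod 2$, so
\[
(S^2)_{ii}\equiv\sum_{k=1}^{n}S_{ik}=(Se)_i\pmod 2 .
\]
By hypothesis $S^2\equiv 0\pmod 2$, hence $(S^2)_{ii}\equiv 0\pmod 2$ for every $i$, and therefore $(Se)_i\equiv 0\pmod 2$ for every $i$; that is, $Se\equiv 0\pmod 2$.
\end{proof}
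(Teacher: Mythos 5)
Your proof is correct: the parity identity $(S^2)_{ii}=\sum_k S_{ik}^2\equiv\sum_k S_{ik}=(Se)_i\pmod 2$ is exactly the right observation, and it only uses the diagonal of the hypothesis. The paper itself states this lemma without proof, citing Wang's 2017 paper, and the argument there is essentially this same diagonal-parity computation, so your proposal matches the intended proof.
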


\begin{lemma}[Wang~\cite{wang2017JCTB}]\label{sachs}
Let $\phi(x)=x^{n}+c_{1}x^{n-1}+\cdots+c_{n-1}x+c_{n}$ be the characteristic polynomial of graph $G$. Then $c_{i}$ is even when $i$ is odd.
\end{lemma}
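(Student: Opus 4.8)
The plan is to work modulo $2$ throughout and to read the coefficients directly off the Leibniz (permutation) expansion of the determinant. Write $A=A(G)$, a symmetric $(0,1)$-matrix with zero diagonal, and recall that $c_i$ is the coefficient of $x^{n-i}$ in $\phi(x)=\det(xI-A)$. Since $-1\equiv 1\pmod 2$ we have $\det(xI-A)\equiv\det(xI+A)\pmod 2$, so it suffices to show that $\det(xI+A)$, viewed as a polynomial over $\mathbb{F}_2$, has possibly-nonzero coefficients only for the monomials $x^{n-2k}$, $k\ge 0$; this is exactly the assertion that $c_i$ is even whenever $i$ is odd.

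Expanding the determinant and discarding signs (they are trivial in characteristic $2$), and using $(xI+A)_{jj}=x$ together with $(xI+A)_{jk}=A_{jk}$ for $j\neq k$, we obtain
\[
\det(xI+A)\ \equiv\ \sum_{\sigma\in S_n}x^{|F_\sigma|}\prod_{j\notin F_\sigma}A_{j,\sigma(j)}\pmod 2,
\]
where $F_\sigma$ is the set of fixed points of $\sigma$. The crucial step is the pairing $\sigma\leftrightarrow\sigma^{-1}$ on $S_n$: one has $F_{\sigma^{-1}}=F_\sigma$, and, after reindexing the product and using $A=A^{\T}$, also $\prod_{j\notin F_\sigma}A_{j,\sigma(j)}=\prod_{j\notin F_{\sigma^{-1}}}A_{j,\sigma^{-1}(j)}$; hence $\sigma$ and $\sigma^{-1}$ contribute \emph{identical} monomials. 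Therefore all $\sigma$ with $\sigma\neq\sigma^{-1}$ cancel in pairs modulo $2$, and only the involutions survive. An involution consists of $k$ disjoint transpositions $(a,b)$ together with $n-2k$ fixed points, so it contributes $x^{n-2k}\prod_{(a,b)}A_{ab}^2$, whence
\[
\det(xI+A)\ \equiv\ \sum_{k\ge 0}\Bigl(\sum_{\sigma}\ \prod_{(a,b)}A_{ab}^2\Bigr)x^{n-2k}\pmod 2,
\]
the inner sum running over involutions $\sigma\in S_n$ with exactly $k$ transpositions $(a,b)$. Every monomial occurring here has degree $n-2k$, i.e.\ it is $x^{n-i}$ with $i=2k$ even; equivalently, for odd $i$ the coefficient of $x^{n-i}$ is an empty sum, hence $0$ over $\mathbb{F}_2$. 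This proves $c_i\equiv0\pmod2$ for odd $i$. (Since $A$ is a $(0,1)$-matrix, $A_{ab}^2\equiv A_{ab}$, so in fact $c_{2k}\equiv m_k(G)\pmod2$ with $m_k(G)$ the number of $k$-matchings of $G$, recovering the familiar congruence between the characteristic polynomial and the matching polynomial.)

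A shorter, entirely equivalent route is via Sachs' Coefficient Theorem: $c_i=\sum_U(-1)^{p(U)}2^{c(U)}$, summed over the subgraphs $U$ of $G$ on $i$ vertices each of whose components is an edge or a cycle, where $p(U)$ is the number of components and $c(U)$ the number of cycles. When $i$ is odd, the edge-components of any such $U$ cover an even number of vertices, so $U$ must contain at least one cycle; thus $c(U)\ge1$, every summand is even, and $c_i$ is even.

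I do not foresee a genuine difficulty here. The one point deserving care is verifying that $\sigma\mapsto\sigma^{-1}$ really preserves the weight $x^{|F_\sigma|}\prod_jA_{j,\sigma(j)}$, which is precisely where the symmetry of $A$ enters, together with the observation that passing to $\mathbb{F}_2$ eliminates all signs so that no computation with $\operatorname{sgn}\sigma$ is needed. If one instead prefers the Sachs-theorem argument, the only "cost" is importing the coefficient theorem as a black box.
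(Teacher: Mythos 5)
Your proposal is correct. The paper itself does not prove this lemma but cites it from Wang's JCTB paper, where (as the label \texttt{sachs} suggests) the intended argument is exactly your second route via Sachs' Coefficient Theorem: for odd $i$ every elementary subgraph on $i$ vertices must contain a cycle, so each summand $(-1)^{p(U)}2^{c(U)}$ is even. Your first argument is a genuinely different and self-contained alternative: working over $\mathbb{F}_2$, the Leibniz expansion of $\det(xI+A)$ pairs each permutation with its inverse (using $A=A^{\T}$ and the zero diagonal), so only involutions survive, and these contribute only monomials $x^{n-2k}$; this avoids importing Sachs' theorem as a black box at the cost of a slightly longer computation, and it also recovers the congruence $c_{2k}\equiv m_k(G)\pmod 2$ with the matching numbers, which the Sachs route gives less directly. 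Both arguments are sound; the one point you rightly flag, the preservation of the weight under $\sigma\mapsto\sigma^{-1}$, is handled correctly by the symmetry of $A$.
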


\begin{lemma}[Wang~\cite{wang2017JCTB}]\label{evenM}
Let $\phi(x)=x^{n}+c_{1}x^{n-1}+\cdots+c_{n-1}x+c_{n}$ be the characteristic polynomial of the graph $G$. Let
\begin{equation*}
M=M(G):=\begin{cases}
&A^{\lceil\frac{n}{2}\rceil}+c_{2}A^{\lceil\frac{n-2}{2}\rceil}+\cdots+c_{n-2}A+c_{n}I, ~\mbox{if n is even};\\ &A^{\lceil\frac{n}{2}\rceil}+c_{2}A^{\lceil\frac{n-2}{2}\rceil}+\cdots+c_{n-3}A^2+c_{n-1}A, ~\mbox{if n is odd}.
\end{cases}
\end{equation*}
Then
 $Me\equiv0\pmod2$.
\end{lemma}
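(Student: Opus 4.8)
The plan is to prove the stronger statement $M^2 \equiv 0 \pmod 2$; once this is in hand, $Me \equiv 0 \pmod 2$ follows immediately from Lemma~\ref{leM}, because $M$ is an integral symmetric matrix (it is an integer polynomial in the symmetric matrix $A$). So I would write $M = m(A)$, where $m$ is the polynomial read off from the statement: it has degree $\lceil n/2 \rceil$, constant term $c_n$ when $n$ is even and no constant term when $n$ is odd, and in both cases its coefficient of $x^{\lceil n/2 \rceil - i}$ equals $c_{2i}$ (with the convention $c_0 = 1$), for $i = 0, 1, \dots$.

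The first step is Cayley--Hamilton: $\phi(A) = A^n + c_1 A^{n-1} + \cdots + c_{n-1}A + c_n I = 0$. Reducing modulo $2$ and invoking Lemma~\ref{sachs} (which says $c_i$ is even for odd $i$) kills every odd-indexed term, leaving $\sum_{2i \le n} c_{2i} A^{n-2i} \equiv 0 \pmod 2$; denote the corresponding polynomial, reduced mod $2$, by $\bar{\phi}(x)$. The second step is the Frobenius identity in characteristic $2$: since the powers of $A$ all commute and $c^2 \equiv c \pmod 2$ for every integer $c$, any integer polynomial $g(x) = \sum_j b_j x^j$ satisfies $g(A)^2 \equiv \sum_j b_j A^{2j} \pmod 2$. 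Applying this to $m$, a direct comparison of exponents shows $m(x)^2 \equiv \bar{\phi}(x) \pmod 2$ when $n$ is even, and $m(x)^2 \equiv x\,\bar{\phi}(x) \pmod 2$ when $n$ is odd, the extra factor $x$ coming from the shift in degrees. Substituting $A$ then gives $M^2 \equiv \bar{\phi}(A) \equiv 0 \pmod 2$ in the even case, and $M^2 \equiv A\,\bar{\phi}(A) \equiv 0 \pmod 2$ in the odd case.

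Finally, with $M^2 \equiv 0 \pmod 2$ established and $M = M^{\T}$ integral, Lemma~\ref{leM} yields $Me \equiv 0 \pmod 2$, which is the assertion. The only thing requiring care is the index bookkeeping in the second step — matching $m(x)^2$ with $\bar{\phi}(x)$ or with $x\,\bar{\phi}(x)$ according to the parity of $n$ — but this is purely a matter of checking exponents, and the two parities are handled uniformly by the Frobenius trick, so I do not foresee any genuine obstacle.
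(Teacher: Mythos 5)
Your proposal is correct and follows essentially the same route as the paper's proof: reduce Cayley--Hamilton modulo $2$ using Lemma~\ref{sachs}, use the characteristic-$2$ squaring (Frobenius) identity to get $M^2\equiv 0\pmod 2$, and then invoke Lemma~\ref{leM}. The only difference is cosmetic: the paper writes out the even case and declares the odd case analogous, while you note explicitly the extra factor of $x$ needed in the odd case.
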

\begin{proof}In view of the importance of the lemma, we shall give a short proof here for completeness. We only prove the lemma for the case that $n$ is even, the case that $n$ is odd can be proved in the similar way.
By Cayley-Hamilton's Theorem, we get
\begin{equation}\label{eqM1}
  A^{n}+c_{1}A^{n-1}+\cdots+c_{n-1}A+c_{n}I=0.
\end{equation}
 By Lemma~\ref{sachs}, $c_{i}$ is even when $i$ is odd. Then we can rewrite Eq.~\eqref{eqM1} as
 \begin{equation}\label{eqM2}
  A^{n}+c_{2}A^{n-2}+\cdots+c_{n-2}A^{2}+c_{n}I\equiv0\pmod2.
\end{equation}
Note that $M=A^{\frac{n}{2}}+c_{2}A^{\frac{n-2}{2}}+\cdots+c_{n-2}A+c_{n}I$ when $n$ is even, hence
 \begin{eqnarray*}
   M^{2} &=& (A^{\frac{n}{2}}+c_{2}A^{\frac{n-2}{2}}+\cdots+c_{n-2}A+c_{n}I)^{2} \\
     &\equiv& A^{n}+c_{2}^{2}A^{n-2}+\cdots+c_{n-2}^{2}A^{2}+c_{n}^{2}I\\
    &\equiv&A^{n}+c_{2}A^{n-2}+\cdots+c_{n-2}A^{2}+c_{n}I \\
    &\equiv&0\pmod2.
 \end{eqnarray*}
 Note that $M$ is an  integral symmetric matrix, according to Lemma~\ref{leM}, $Me\equiv0\pmod2$. The proof is complete.
\end{proof}

As an immediate consequence of Lemma~\ref{evenM}, we have

\begin{Cor}\label{evencase} Let $G$ and $H$ be two generalized cospectral graphs. Then there exist integers $a_0, a_1,\ldots,a_{r-1}$ such that
Eq.~\eqref{key1} holds for $p=2$.
\end{Cor}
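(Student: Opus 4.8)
The goal is to establish Corollary \ref{evencase}: for two generalized cospectral graphs $G$ and $H$ with $r = \rank_2 W(G) = \lceil n/2 \rceil$, there exist integers $a_0, a_1, \ldots, a_{r-1}$ such that both equations in Eq.~\eqref{key1} hold modulo $2$. The plan is to produce these integers \emph{explicitly} from the coefficients of the common characteristic polynomial, using Lemma~\ref{evenM} as the engine. Since $G$ and $H$ are generalized cospectral, Lemma~\ref{lortho} gives $Q^{\T}A(G)Q = A(H)$ with $Qe = e$, so $A(G)$ and $A(H)$ share the same characteristic polynomial $\phi(x) = x^n + c_1 x^{n-1} + \cdots + c_n$; in particular the coefficients $c_i$ are the same for both graphs.

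First I would invoke Lemma~\ref{evenM} directly: with $M(G)$ defined there as $A^{\lceil n/2\rceil} + c_2 A^{\lceil (n-2)/2\rceil} + \cdots$ (the even/odd-$n$ cases), we have $M(G)e \equiv 0 \pmod 2$. The point is that this $M(G)$ is a polynomial in $A(G)$ of degree exactly $\lceil n/2 \rceil = r$, with leading coefficient $1$ and all other coefficients drawn from $\{c_2, c_4, \ldots\}$ (suitably indexed, and padded with zeros in the degrees that do not appear). Reading off those coefficients gives precisely a tuple $(a_0, a_1, \ldots, a_{r-1})$ with
\[
M(G)e = a_0 e + a_1 A(G) e + \cdots + a_{r-1} A(G)^{r-1} e + A(G)^r e \equiv 0 \pmod 2,
\]
which is the first line of Eq.~\eqref{key1}. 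The crucial observation — and this is what makes the corollary nearly immediate — is that these $a_i$ depend only on $\phi(x)$, \emph{not} on which graph realizes it. Hence applying Lemma~\ref{evenM} to $H$, whose characteristic polynomial is the same $\phi(x)$, yields $M(H)e = a_0 e + a_1 A(H) e + \cdots + a_{r-1} A(H)^{r-1} e + A(H)^r e \equiv 0 \pmod 2$ with the \emph{same} integers $a_i$. That is exactly the second line of Eq.~\eqref{key1}, and we are done.

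The one point that needs a careful sentence rather than a wave of the hand is the bookkeeping that Lemma~\ref{evenM}'s $M$ really is a monic polynomial in $A$ of degree $r = \lceil n/2 \rceil$ with integer coefficients, so that the extraction of $(a_0,\ldots,a_{r-1})$ is legitimate and these are genuine integers (they are, being among the $c_i$ and zeros). When $n$ is odd one should note that $M(G)$ as defined in Lemma~\ref{evenM} has zero constant term (it is $A$ times a polynomial of degree $r-1$), so $a_0 = 0$ there; this is consistent and harmless. I would also remark that the definition of $M$ in Definition~\ref{defM} is characterized by $Me \equiv 0 \pmod p$ being the unique monic dependence of degree $r$ — and by Lemma~\ref{rank} together with $r = \lceil n/2\rceil$ the first $r$ columns $e, Ae, \ldots, A^{r-1}e$ are independent over $\mathbb{F}_2$, so this dependence is unique — hence the $M$ furnished by Lemma~\ref{evenM} coincides mod $2$ with $M(G)$ of Definition~\ref{defM}. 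I do not anticipate a genuine obstacle here: the entire content is the graph-independence of the coefficients, which is forced by generalized cospectrality via Lemma~\ref{lortho}; the rest is Lemma~\ref{evenM} applied twice.
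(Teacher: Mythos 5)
Your proposal is correct and matches the paper's own route: the paper derives Corollary~\ref{evencase} exactly as an immediate consequence of Lemma~\ref{evenM}, applied to both $G$ and $H$, using the fact that generalized cospectral graphs share the same characteristic polynomial so the coefficients $a_i$ (read off from $c_2,c_4,\ldots$, padded with zeros) are identical for both graphs. Your extra bookkeeping remarks (monic of degree $r=\lceil n/2\rceil$, $a_0=0$ for odd $n$, uniqueness mod $2$ via Lemma~\ref{rank}) are accurate but not needed beyond what the paper tacitly assumes.
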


Now, we are ready to present the proof of Theorem~\ref{Main}~(ii).

\begin{proof}[Proof of Theorem~\ref{Main} (ii)]
Since $G$ and $H$ have the same generalized spectrum, by Lemma~\ref{lortho}, we have $Q^{\T}A(G)Q=A(H)$ and $Qe=e$. It follows that
\begin{gather*}
  Q^{\T}e=e, \\
  Q^{\T}Ae=Be,\\
  \vdots \\
   Q^{\T}A^{r-1}e=B^{r-1}e,\\
  Q^{\T}A^{r}e=B^{r}e,
\end{gather*}

It follows from Corollary~\ref{evencase} that Eq.~\eqref{key1} holds. Multiplying by $a_0, a_1,\ldots,a_{r-1}$ on the first to the $r$-th equations, respectively, and adding them to both sides of the $(r+1)$-th equations generates $Q^{\T}M(G)e=M(H)e$. Furthermore, it is easy to verify that
 \begin{equation*}
Q^{\T}A^{i}M(G)e=B^{i}M(H)e,~ \mbox{for} ~i=1,2,\ldots,\left\lfloor n/2\right\rfloor-1.
 \end{equation*}
 Thus, $Q^{\T}\bar{W}(G)=\bar{W}(H)$, where $$\bar{W}(G)=[e,Ae,\ldots,A^{\lceil\frac{n}{2}\rceil-1}e,M(G)e,AM(G)e,\ldots,A^{\lfloor\frac{n}{2}\rfloor-1}M(G)e]$$ and $$\bar{W}(H)=[e,Be,\ldots,B^{\lceil\frac{n}{2}\rceil-1}e,M(H)e,BM(H)e,\ldots,B^{\lfloor\frac{n}{2}\rfloor-1}M(H)e].$$
 By Lemma~\ref{evenM}, we have $M(G)e\equiv M(H)e\equiv0\pmod2$. Dividing the last $\lfloor\frac{n}{2}\rfloor$ columns of $\bar{W}(G)$ and $\bar{W}(H)$ by $2$ simultaneously, it follows that
\begin{equation}\label{eqeven1}
 Q^{\T}\hat{W}(G)=\hat{W}(H),
\end{equation}
where $$\hat{W}(G)=[e,Ae,\ldots,A^{\lceil\frac{n}{2}\rceil-1}e,\frac{M(G)e}{2},\frac{AM(G)e}{2},\ldots,\frac{A^{\lfloor\frac{n}{2}\rfloor-1}M(G)e}{2}]$$ and $$\hat{W}(H)=[e,Be,\ldots,B^{\lceil\frac{n}{2}\rceil-1}e,\frac{M(H)e}{2},\frac{BM(H)e}{2},\ldots,\frac{B^{\lfloor\frac{n}{2}\rfloor-1}M(H)e}{2}].$$

It follows from Lemma~\ref{lSNF} that $\hat{d}_{n}=\frac{d_n}2$. By Lemma~\ref{rem1}, we get that $\ell\mid \frac{d_{n}}{2}$. This completes the proof.

\end{proof}
 Finally, combining the proofs of Theorem~\ref{Main} (i) and Theorem~\ref{Main} (ii), Theorem~\ref{Main} follows.

 \section{An example}

 In this section, we shall give an example to illustrate the powerfulness of Theorem~\ref{Main}.
Let $G$ be a graph with adjacency matrix $A(G)$ given as follows:
$$A(G)={\tiny{
\left(
\begin{array}{cccccccccccc}
 0 & 1 & 0 & 1 & 0 & 1 & 0 & 0 & 0 & 0 & 0 & 0 \\
 1 & 0 & 1 & 1 & 1 & 1 & 1 & 1 & 1 & 0 & 1 & 1 \\
 0 & 1 & 0 & 0 & 1 & 0 & 0 & 1 & 0 & 1 & 0 & 0 \\
 1 & 1 & 0 & 0 & 0 & 0 & 1 & 0 & 1 & 1 & 0 & 0 \\
 0 & 1 & 1 & 0 & 0 & 0 & 1 & 0 & 1 & 1 & 1 & 1 \\
 1 & 1 & 0 & 0 & 0 & 0 & 0 & 0 & 0 & 1 & 0 & 0 \\
 0 & 1 & 0 & 1 & 1 & 0 & 0 & 0 & 1 & 0 & 0 & 0 \\
 0 & 1 & 1 & 0 & 0 & 0 & 0 & 0 & 0 & 1 & 0 & 1 \\
 0 & 1 & 0 & 1 & 1 & 0 & 1 & 0 & 0 & 0 & 1 & 0 \\
 0 & 0 & 1 & 1 & 1 & 1 & 0 & 1 & 0 & 0 & 0 & 0 \\
 0 & 1 & 0 & 0 & 1 & 0 & 0 & 0 & 1 & 0 & 0 & 1 \\
 0 & 1 & 0 & 0 & 1 & 0 & 0 & 1 & 0 & 0 & 1 & 0 \\
\end{array}
\right)
_{12\times 12}}}.$$

It is easy to compute using Mathematica 12.0 that the SNF of $W(G)$ is $${\rm diag}(\underbrace{1,1,1,1,1,1}_6,\underbrace{2,2,2,2,2,2\times 5^2\times 1145387}_6).$$
 Then for any $Q\in{\mathcal{Q}(G)}$ with level $\ell$, we have $\ell\mid 5$ according to Theorem~\ref{Main}. Therefore $\ell=1$ or $\ell=5$; the case $\ell=25$ is impossible by Theorem~\ref{Main}.

 Actually, we can find a regular rational orthogonal matrix
 $Q\in{\mathcal{Q}(G)}$ with level 5 which is given as follows (we refer the interested reader to \cite{WangWangYu} for the details):
$$Q={\tiny{\frac{1}{5}
\left(
\begin{array}{cccccccccccc}
 2 & 2 & -1 & -1 & 1 & 1 & 3 & -2 & 0 & 0 & 0 & 0 \\
 0 & 0 & 0 & 0 & 0 & 0 & 0 & 0 & 5 & 0 & 0 & 0 \\
 2 & 2 & -1 & -1 & 1 & 1 & -2 & 3 & 0 & 0 & 0 & 0 \\
 3 & -2 & 1 & 1 & -1 & -1 & 2 & 2 & 0 & 0 & 0 & 0 \\
 0 & 0 & 0 & 0 & 0 & 0 & 0 & 0 & 0 & 5 & 0 & 0 \\
 1 & 1 & 2 & 2 & 3 & -2 & -1 & -1 & 0 & 0 & 0 & 0 \\
 -1 & -1 & 3 & -2 & 2 & 2 & 1 & 1 & 0 & 0 & 0 & 0 \\
 -1 & -1 & -2 & 3 & 2 & 2 & 1 & 1 & 0 & 0 & 0 & 0 \\
 0 & 0 & 0 & 0 & 0 & 0 & 0 & 0 & 0 & 0 & 5 & 0 \\
 0 & 0 & 0 & 0 & 0 & 0 & 0 & 0 & 0 & 0 & 0 & 5 \\
 -2 & 3 & 1 & 1 & -1 & -1 & 2 & 2 & 0 & 0 & 0 & 0 \\
 1 & 1 & 2 & 2 & -2 & 3 & -1 & -1 & 0 & 0 & 0 & 0 \\
\end{array}
\right)}}.$$
Then, it is easy to verify that $A'=Q^{\T}AQ$ is a (0,1)-matrix given as follows:
$$A'=Q^{\T}AQ={\tiny{\left(
\begin{array}{cccccccccccc}
 0 & 0 & 0 & 0 & 1 & 0 & 1 & 0 & 1 & 0 & 0 & 1 \\
 0 & 0 & 0 & 1 & 0 & 0 & 0 & 0 & 1 & 1 & 0 & 0 \\
 0 & 0 & 0 & 0 & 0 & 0 & 1 & 0 & 1 & 1 & 1 & 0 \\
 0 & 1 & 0 & 0 & 0 & 1 & 0 & 0 & 1 & 0 & 0 & 1 \\
 1 & 0 & 0 & 0 & 0 & 0 & 0 & 0 & 1 & 0 & 0 & 1 \\
 0 & 0 & 0 & 1 & 0 & 0 & 0 & 1 & 1 & 1 & 0 & 0 \\
 1 & 0 & 1 & 0 & 0 & 0 & 0 & 0 & 1 & 0 & 1 & 0 \\
 0 & 0 & 0 & 0 & 0 & 1 & 0 & 0 & 1 & 1 & 1 & 1 \\
 1 & 1 & 1 & 1 & 1 & 1 & 1 & 1 & 0 & 1 & 1 & 0 \\
 0 & 1 & 1 & 0 & 0 & 1 & 0 & 1 & 1 & 0 & 1 & 1 \\
 0 & 0 & 1 & 0 & 0 & 0 & 1 & 1 & 1 & 1 & 0 & 0 \\
 1 & 0 & 0 & 1 & 1 & 0 & 0 & 1 & 0 & 1 & 0 & 0 \\
\end{array}
\right)_{12\times 12}}}.$$
Thus $A'$ is the adjacency matrix of a graph $G'$ which is generalized cospectral with $G$ but non-isomorphic to $G$. It can be further proved that, up to isomorphism, the graph $G'$ is the \emph{only} graph that are generalized cospectral with $G$ but non-isomorphic to $G$; see~\cite{WangWangYu}.

\section{Conclusions}

In this paper, we have presented a new method which results in a stronger version of a theorem of Wang~\cite{wang2017JCTB}.
The new proof is quite straightforward and is much easier to follow than the original ones in~\cite{wang2013EJC,wang2017JCTB},
which gives a new framework in dealing with the problem of generalized spectral characterizations of graphs. As a future work, we
would like to explore the method in more general situations, e.g., for $p=2$, we require that $\rank_2 W=\lceil\frac{n}2\rceil$ holds
in Theorem~\ref{Main}~(ii). However, we suspect that this condition can be removed. Also, Theorem~\ref{Main}~(i) requires that $\rank_pW =n-1$
for odd prime $p$. It would be interesting to consider the general case $\rank_pW =r$ for $r\geq 1$.

\end{document}